\documentclass[a4paper, 12 pt, oneside, reqno]{amsart} 
\usepackage{amsfonts, amssymb, amsmath, eucal, amsthm}
\usepackage[hidelinks]{hyperref}

\newtheorem{lemma}{Lemma}
\numberwithin{lemma}{section}
\newtheorem{theorem}[lemma]{Theorem}
\newtheorem*{theorem*}{Theorem}
\newtheorem*{theorema}{Theorem A}
\newtheorem*{theoremb}{Theorem B}
\newtheorem*{theoremc}{Theorem C}
\newtheorem*{corollary*}{Corollary}
\newtheorem{proposition}[lemma]{Proposition}
\newtheorem{corollary}[lemma]{Corollary}

\theoremstyle{definition}
\newtheorem{definition}[lemma]{Definition}

\newtheorem{example}[lemma]{Example}

\newtheorem*{remark*}{Remark}

\usepackage{geometry}

\newcommand{\K}{\mathcal{K}}
\newcommand{\C}{\mathcal{C}}
\newcommand{\CB}{\mathcal{CB}}

\newcommand{\bbZ}{\mathbb{Z}}
\newcommand{\bbR}{\mathbb{R}}
\newcommand{\bbC}{\mathbb{C}}

\title{Groups of convex bodies}
\author{Richard Hepworth}
\address{Institute of Mathematics\\
University of Aberdeen
}
\email{r.hepworth@abdn.ac.uk}

\subjclass[2020]{Primary 28A75, 52A20; Secondary 52B45, 19D99}
\keywords{Convex bodies, McMullen polynomiality, K-theory}

%This is my method for rotating the keylabels:
%\usepackage{rotating}
%\usepackage[notref,notcite]{showkeys}
%\renewcommand{\showkeyslabelformat}[1]{\begin{turn}{-60}\small{\rm #1}\end{turn}}

\begin{document}

\begin{abstract}
	In this paper we introduce and study a topological abelian group
	of convex bodies, analogous to the scissors congruence group
	and McMullen's polytope algebra,
	with the universal property that continuous valuations on convex
	bodies correspond to continuous homomorphisms on the group
	of convex bodies.
	To study this group, we first 
	obtain a version of McMullen polynomiality
	for valuations that take values not in fields or vector spaces,
	but in abelian groups.
	Using this, we are able to equip the group of convex bodies with
	a grading that consists of real vector spaces in all positive degrees,
	mirroring one of the main structural properties of the polytope algebra.
	It is hoped that this work can serve as the starting point for a
	$K$-theoretic interpretation of valuations on convex bodies.
\end{abstract}

\maketitle
%\setcounter{tocdepth}{1}
%\tableofcontents

\section{Introduction}

\subsection*{Valuations}
Let $V$ be a finite-dimensional real vector space. 
A \emph{convex body} in $V$
is a nonempty compact convex subset $X\subseteq V$. 
The set $\K(V)$ of all convex bodies in $V$ is topologised using
the Hausdorff metric.
A function $\varphi\colon\K(V)\to A$, with values in an abelian group
$A$, is a \emph{valuation} if it satisfies the relation
\[
	\varphi(B\cup C) = \varphi(B)+\varphi(C) - \varphi(B\cap C)
\]
whenever $B,C\in\K(V)$ with $B\cup C\in \K(V)$.  Continuous valuations on $\K(V)$ with values in $\bbR$ or $\bbC$
have been studied under many different symmetry conditions.
For example, Hadwiger's theorem classifies all continuous isometry-invariant
valuations on convex bodies in Euclidean space~\cite{Hadwiger,Chen,Klain},
Klain and Schneider classified all simple continuous translation-invariant
valuations~\cite{Klain,Schneider},
and Alesker classified continuous valuations on 
$\K(\bbC^n)$ that are invariant under $U(n)$ 
and translations~\cite{AleskerLefschetz}.
Many of the results on continuous valuations with values in $\bbR$ or $\bbC$
extend naturally to continuous valuations with values in topological vector
spaces (see for example section~6.4 of~\cite{SchneiderBook}).
It is natural to ask what happens when values are taken in 
an arbitrary Hausdorff topological abelian group.
The answer is that, assuming translation-invariance,
essentially nothing new happens:

\begin{theorema}
	Let $V$ be a real finite-dimensional vector space.
	Let $\varphi\colon\K(V)\to A$ be a continuous translation-invariant 
	valuation with values in a Hausdorff topological abelian group $A$.
	Let $c\in A$ denote the value of $\varphi$ on any one-point convex
	body.
	Then the valuation $\varphi-c\colon\K(V)\to A$ admits a factorisation 
	\[
		\K(V)\xrightarrow{\ f\ }\mathcal{V}\xrightarrow{\ g\ }A
	\]
	where 
	$\mathcal{V}$ is a Hausdorff topological vector space,
	$f$ is a continuous translation-invariant valuation,
	and $g$ is a continuous homomorphism of topological abelian
	groups.
\end{theorema}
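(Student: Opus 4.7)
The plan is to reduce Theorem~A to the two main structural results announced in the abstract: the universal property of the group of convex bodies, and the grading whose pieces in positive degree are real vector spaces. Once this machinery is in place, the theorem should follow almost formally.

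First I would pass from valuations to homomorphisms. Working with the translation-invariant variant $\CBtr(V)$ of the group of convex bodies, the universal property converts the continuous translation-invariant valuation $\varphi\colon\K(V)\to A$ into a continuous homomorphism $\tilde\varphi\colon\CBtr(V)\to A$ of topological abelian groups. The constant $c$ is the value of $\tilde\varphi$ on the class $[\mathrm{pt}]$ of a one-point body, so $\varphi-c$ is again a continuous translation-invariant valuation, now vanishing on points, and it corresponds to a continuous homomorphism $\psi\colon\CBtr(V)\to A$ with $\psi([\mathrm{pt}])=0$.

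Next I would invoke the grading. The main structural theorem should decompose $\CBtr(V)$ as a topological direct sum of a degree-$0$ piece, spanned by $[\mathrm{pt}]$, and a positive-degree piece $\mathcal{V}$ that is a Hausdorff topological real vector space. Since $\psi$ kills the degree-$0$ summand, its restriction $g\colon\mathcal{V}\to A$ is a continuous homomorphism. Setting $f(X)=[X]-[\mathrm{pt}]$ produces a continuous translation-invariant valuation $\K(V)\to\mathcal{V}$, provided that the degree-$0$ component of $[X]$ in $\CBtr(V)$ is $[\mathrm{pt}]$ for every convex body (an Euler-characteristic-type normalisation that one expects from the construction). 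The composition $g\circ f$ recovers $\varphi-c$ tautologically.

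The hard part is therefore not Theorem~A itself but the groundwork it relies on: constructing $\CBtr(V)$ with the required universal property for continuous valuations into \emph{Hausdorff} topological abelian groups, producing the grading as a topological decomposition rather than merely an algebraic one, and---most critically---showing that the positive-degree piece is not just a $\bbZ$-module but carries a genuine topological real vector space structure. These are precisely the results the body of the paper must establish; once they are available, Theorem~A follows by the formal argument above.
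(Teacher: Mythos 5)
Your proposal is correct and takes essentially the same route as the paper: both use the universal property to pass to a homomorphism out of the group of convex bodies, and then apply the McMullen decomposition so that $\varphi - c$ factors through the positive-degree piece, which is the required Hausdorff topological vector space. Your $f(X)=[X]-[\mathrm{pt}]$ coincides with the paper's $\sum_{i\geqslant 1}\Phi_i$ (via the identity $\Phi_0(X)=[\mathrm{pt}]$, i.e.\ the ``Euler-characteristic normalisation'' you flag), and your restriction of $\psi$ to the positive-degree summand agrees there with the paper's restriction of $\bar\varphi$.
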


The need to subtract a constant cannot be overcome, since the constant map
$\K(V)\to\bbZ$ with value $1$ is a continuous translation invariant valuation.

\subsection*{Groups of convex bodies}
Theorem~A is a byproduct of work 
inspired by the relationship between scissors congruence 
and algebraic $K$-theory.
\emph{Scissors congruence} is the study of 
polytopes modulo dissection and rearrangement.
The \emph{scissors congruence
group} is the free abelian group on polytopes modulo symmetry, 
in which a polytope is identified with the sum of the parts in
any of its dissections.
Zakharevich~\cite{ZakharevichI,ZakharevichII,ZakharevichIII,ZakharevichIV}
established a $K$-theoretic interpretation of the scissors congruence group,
and produced a sequence of groups 
--- the $K$-theory groups of a certain Waldhausen category ---
whose $0^\mathrm{th}$ term is precisely the scissors congruence group,
and whose higher terms encode information about symmetries of polytopes.
Is there a $K$-theoretic interpretation of valuations on convex bodies?
And, as an initial step, can an analogue of the scissors congruence group 
be established and understood for convex bodies?
Theorem~A follows from our progress on the initial step, as we now explain.

Let $V$ be a finite dimensional real vector space,
and let $G$ be a group of affine transformations of $V$. 
In this paper we introduce the \emph{group of convex bodies} $\CB(V,G)$,
which is the free Hausdorff topological abelian group 
on the space $\K(V)$,
modulo the closed subgroup generated by the elements of the form
\[
	[gX]-[X]\quad\text{and}\quad[B\cup C]-[B]-[C]+[B\cap C]
\]
for $g\in G$, $X\in\K(V)$, and 
$B,C\in\K(V)$ with $B\cup C\in\K(V)$.
It is the target of a continuous $G$-invariant valuation
$\Phi\colon\K(V)\to \CB(V,G)$ that is universal in the following sense: 
Any continuous $G$-invariant valuation
$\varphi\colon\K(V)\to A$
with values in a Hausdorff topological abelian group $A$ 
factors uniquely as
$\varphi = \bar\varphi\circ\Phi$
for some continuous homomorphism $\bar\varphi\colon\CB(V,G)\to A$.
So $\CB(V,G)$ is our analogue of the scissors congruence group.
We obtain the following fundamental structural result,
from which Theorem~A follows quickly:

\begin{theoremb}
	Let $V$ be a finite-dimensional real vector space
	of dimension $d$,
	and let $G$ be a group of affine linear transformations of $V$ that
	includes all translations.
	Then there is a direct sum decomposition
	of Hausdorff topological abelian groups
	\[
		\CB(V,G)
		\cong
		\CB_0(V,G)\oplus\cdots\oplus\CB_d(V,G)
	\]
	and a corresponding decomposition 
	\[
		\Phi = \Phi_0\oplus\cdots\oplus\Phi_d
	\]
	of $\Phi$ into continuous $G$-invariant valuations
	$\Phi_i\colon\K(V)\to\CB_i(V,G)$.
	The group $\CB_0(V,G)$ is a copy of $\bbZ$ generated by
	$\Phi_0(X)$ for any $X\in\K(V)$.
	And for each $1\leqslant i\leqslant d$,
	$\CB_i(V,G)$ admits the structure of an $\bbR$-vector space,
	with respect to which $\Phi_i(\lambda X) = \lambda^i\Phi_i(X)$ 
	for $\lambda\in[0,\infty)$ and $X\in\K(V)$.
\end{theoremb}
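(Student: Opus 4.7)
The plan is to construct the grading from the paper's McMullen polynomiality for abelian-group-valued valuations, transfer it to $\CB(V,G)$ via the universal property, and then install a real vector space structure on each positive-degree summand using the dilation endomorphisms.

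I first apply McMullen polynomiality to the universal valuation $\Phi$ to obtain continuous $G$-invariant valuations $\Phi_0,\ldots,\Phi_d\colon\K(V)\to\CB(V,G)$ satisfying $\Phi(nX)=\sum_{i}n^i\Phi_i(X)$ for $n\in\mathbb{N}$, with each $\Phi_i$ of pure degree $i$. The universal property factors each $\Phi_i$ as $\pi_i\circ\Phi$ for a unique continuous homomorphism $\pi_i\colon\CB(V,G)\to\CB(V,G)$. The relations $\sum_i\pi_i=\mathrm{id}$ (from $n=1$) and $\pi_j\pi_i=\delta_{ij}\pi_i$ (from the uniqueness clause of McMullen polynomiality applied to $\Phi_i$) then hold on the dense generating set $\Phi(\K(V))$ and extend by continuity. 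Setting $\CB_i:=\operatorname{im}(\pi_i)$ yields the direct sum decomposition of Hausdorff topological abelian groups. Specialising $n=0$ in the polynomiality identity gives $\Phi_0(X)=\Phi(\{0\})$, independent of $X$; the constant valuation $\chi\equiv 1$ factors through $\Phi$ as a continuous homomorphism $\bar\chi\colon\CB(V,G)\to\bbZ$ sending $\Phi(\{0\})$ to $1$ and killing $\CB_i$ for $i\geqslant 1$ by homogeneity, so $\bar\chi$ identifies $\CB_0\cong\bbZ$.

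The main work is the $\bbR$-vector space structure on $\CB_i$ for $i\geqslant 1$. Because $G$ contains translations, the map $X\mapsto\Phi(\lambda X)$ is a continuous $G$-invariant valuation for each $\lambda\geqslant 0$ and hence factors as $\delta_\lambda\circ\Phi$ for a continuous endomorphism $\delta_\lambda$ of $\CB(V,G)$ depending continuously on $\lambda$ and satisfying $\delta_\lambda\delta_\mu=\delta_{\lambda\mu}$. On $\CB_i$ one has $\delta_n=n^i\cdot\mathrm{id}$, and since $X\mapsto qX$ is surjective on $\K(V)$ the endomorphism $\delta_q|_{\CB_i}$ is also surjective, so $\CB_i$ is divisible. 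I define $\lambda\cdot a:=\delta_{\lambda^{1/i}}(a)$ for $\lambda>0$, $0\cdot a:=0$, and $(-\lambda)\cdot a:=-(\lambda\cdot a)$; multiplicativity, distributivity in $a$, and continuity follow immediately.

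The hard part is additivity in the scalar, $(\lambda+\mu)\cdot a=\lambda\cdot a+\mu\cdot a$, which on generators $a=\Phi_i(X)$ reduces to $\Phi_i((\lambda+\mu)^{1/i}X)=\Phi_i(\lambda^{1/i}X)+\Phi_i(\mu^{1/i}X)$. My plan is to apply the two-variable form of McMullen polynomiality to $\Phi(sX+tX)=\Phi((s+t)X)$ with integer $s,t$, project through $\pi_i$, and use divisibility of $\CB_i$ to lift the resulting integer polynomial identity to positive rationals; continuity of $\delta_\lambda$ in $\lambda$ then extends it to all positive reals. Once additivity is secured, the remaining axioms are routine, $\CB_i$ becomes a topological $\bbR$-vector space, and the relation $\Phi_i(\lambda X)=\lambda^i\Phi_i(X)$ for $\lambda\in[0,\infty)$ holds by construction.
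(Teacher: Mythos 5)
Your overall architecture matches the paper's: extract degree components from McMullen polynomiality, promote them to commuting idempotent projections on $\CB(V,G)$ via the universal property, identify $\CB_0$ with $\bbZ$ via the Euler characteristic valuation, and then install a scalar multiplication on $\CB_i$ using the dilation endomorphisms. The identification $\CB_0\cong\bbZ$ and the relation $\pi_j\pi_i=\delta_{ij}\pi_i$ can be pushed through along the lines you indicate (though the latter requires a genuine appeal to the uniqueness of polynomial expansions applied to the function $\lambda\mapsto\pi_j\Phi(\lambda X)$, not just to ``$\Phi_i$''), so the decomposition part is sound.

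The gap is exactly where you flag the ``hard part.'' The paper sidesteps the difficulty entirely: it lifts dilation to a continuous map $D\colon[0,\infty)\times\CB(V,G)\to\CB(V,G)$ \emph{first}, verifies the iterated-difference vanishing criterion for $D(-,x)$ for every $x\in\CB(V,G)$ (Proposition~\ref{criterion}), and then applies the parameterised existence theorem (Theorem~\ref{existence}) to $D$. This produces dilation components $\rho_i\colon[0,\infty)\times\CB(V,G)\to\CB(V,G)$ that are, \emph{by construction}, continuous semigroup homomorphisms in the first variable. Scalar additivity $(\lambda+\mu)\ast x=\lambda\ast x+\mu\ast x$ is then literally the statement that $\rho_i(-,x)$ is a homomorphism, and nothing further needs to be proved. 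Your route instead evaluates the polynomial expansion of $\lambda\mapsto\Phi(\lambda X)$ at $\lambda=1$ to define $\Phi_i$, which throws away precisely the homomorphism structure in the scaling variable, and you then have to reconstruct it.

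Your proposed reconstruction does not work as stated. Applying ``the two-variable form of McMullen polynomiality to $\Phi(sX+tX)=\Phi((s+t)X)$'' yields nothing, because $sX+tX=(s+t)X$ for a convex body $X$ is an identity in $\K(V)$, so both sides have the same one-variable polynomial expansion; no new relation between degree-$i$ components at distinct scales comes out of it. The identity you actually need is $\Phi_i\bigl((\lambda+\mu)^{1/i}X\bigr)=\Phi_i(\lambda^{1/i}X)+\Phi_i(\mu^{1/i}X)$, and the obstruction to getting it from the integer/rational homogeneity $\Phi_i(nX)=n^i\Phi_i(X)$ is that $(s^i+t^i)^{1/i}$ is essentially never rational, so you cannot simply interpolate; and the limiting argument you gesture at needs the auxiliary facts that $\CB_i$ is uniquely divisible (invertibility, not just surjectivity, of $a\mapsto na$) and that rational scalar multiplication is continuous in the scalar, neither of which is free. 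A repair is possible --- for instance, apply the one-variable existence theorem to $g(\lambda)=\Phi_i(\lambda X)$ to get $g(\lambda)=\sum_j g_j(\lambda^j)$ with the $g_j$ homomorphisms, then use torsion-freeness of $\CB_i$ plus uniqueness to kill $g_j$ for $j\neq i$, recovering $g(\lambda)=g_i(\lambda^i)$ with $g_i$ a homomorphism --- but at that point you have simply re-derived the $\rho_i$ on generators, so it is cleaner to follow the paper and apply the parameterised Theorem~\ref{existence} to $D$ at the outset.
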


Theorem~B shows that 
the topological abelian \emph{group} $\CB(V,G)$ is almost identical to
the topological \emph{vector space} $\bbR\otimes_{\bbZ}\CB(V,G)$ ---
a single copy of $\bbZ$ in the former
is replaced by a copy of $\bbR$ in the latter. 
We anticipate that $\bbR\otimes_{\bbZ}\CB(V,G)$ is simply the
(appropriately topologised) 
dual of the Banach space $\mathrm{Val}_G(V)$ of $G$-invariant valuations on $V$,
so that the study of $\CB(V,G)$ boils down to the study of valuations
in $\bbR$ or $\bbC$, which are well-understood in many cases.
We hope that the results of this paper will open the door to
a $K$-theoretic interpretation of valuations on
convex bodies, analogous to the $K$-theoretic interpretation
of scissors congruence, so that $\CB(V,G)$ becomes the $0^\mathrm{th}$ term
in a sequence whose higher terms encode information
about symmetries of convex bodies.

\subsection*{McMullen polynomiality}

Let us explain the main technical result underpinning Theorem~B.
McMullen polynomiality~\cite{McMullen} shows that 
if $\varphi\colon\K(V)\to \mathcal{V}$ 
is a translation-invariant continuous valuation 
with values in a real Hausdorff topological vector space $\mathcal{V}$,
then $\varphi$ admits a decomposition $\varphi=\varphi_0+\cdots+\varphi_d$
where each $\varphi_i\colon\K(V)\to \mathcal{V}$ is homogeneous of degree $i$,
i.e.~$\varphi_i(\lambda X) = \lambda^i\varphi_i(X)$ 
for $\lambda\in[0,\infty)$ and $X\in\K(V)$.
(See Theorem~6.3.5 of~\cite{Schneider}.)
To prove Theorem~B we establish a form of McMullen polynomiality
for valuations with values in Hausdorff topological abelian groups.
The simplest version of our polynomiality result is the following.

\begin{theoremc}
	Let $V$ be a finite dimensional real vector space, 
	let $A$ be a Hausdorff topological abelian group,
	and let $\varphi\colon\K(V)\to A$ 
	be a continuous translation-invariant valuation.
	Then there are continuous semigroup
	homomorphisms
	\[f_1,\ldots,f_d\colon [0,\infty)\to A\] 
	and a constant $f_0\in A$, all uniquely determined, such that
	\[
		\varphi(\lambda X) = f_0+f_1(\lambda^1)+\cdots+f_d(\lambda^d)
	\]
	for all $\lambda$.
\end{theoremc}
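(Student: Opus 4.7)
My plan is to mimic McMullen's classical proof of polynomiality, replacing the polynomial structure in a vector space with continuous semigroup homomorphisms into $A$. Fix $X\in\K(V)$ and set $g(\lambda)=\varphi(\lambda X)$. By translation invariance $\varphi(\{p\})=\varphi(\{0\})$ for any $p\in V$, so $f_0=g(0)=\varphi(\{0\})$ is forced; after subtracting, I may assume $g(0)=0$.

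First I would establish the finite-difference identity: for all $\lambda,h_1,\ldots,h_{d+1}\geqslant 0$,
\[
\sum_{S\subseteq\{1,\ldots,d+1\}}(-1)^{|S|}\,g\!\left(\lambda+\sum_{i\in S}h_i\right)=0.
\]
This is the abelian-group version of McMullen polynomiality for $g$, and the proof follows the classical route: extend $\varphi$ via Groemer's integration theorem to an additive function on the algebra of polyconvex sets (the construction uses only the valuation axioms and is $\bbZ$-linear, so it works over any abelian group), then apply a combinatorial inclusion--exclusion in the $d$-dimensional ambient space to force the alternating sum to vanish.

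Next I would extract the $f_i$ by Minkowski polarisation. The key identity $(\lambda+\mu)X=\lambda X+\mu X$ combined with the finite-difference identity shows that the alternating sum
\[
W(\lambda_1,\ldots,\lambda_d) := \sum_{S\subseteq\{1,\ldots,d\}}(-1)^{d-|S|}\,g\!\left(\sum_{i\in S}\lambda_i\right)
\]
is continuous, symmetric, and additive in each $\lambda_i$. A direct expansion of $\varphi(\sum\lambda_i X)$ via the valuation axioms on a Minkowski-sum dissection then shows that $W$ factors as $W(\lambda_1,\ldots,\lambda_d)=h(\lambda_1\cdots\lambda_d)$ where $h(\lambda):=W(\lambda,1,\ldots,1)$. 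I would then define $f_d$ as the unique continuous semigroup homomorphism satisfying $d!\,f_d=h$; existence requires $h$ to be $d!$-divisible (which holds thanks to the explicit integer-combinatorial formula for $W$ coming from the Minkowski dissection of $nX$ into $n^d$ near-translates), and uniqueness follows from the key lemma that a continuous semigroup homomorphism $[0,\infty)\to A$ annihilated by any positive integer must vanish, since it extends to a continuous group homomorphism $\mathbb{R}\to A$ also annihilated by that integer and $\mathbb{R}$ is divisible. The function $g(\lambda)-f_d(\lambda^d)$ then satisfies the finite-difference identity of order $d$, and downward induction on $d$ produces $f_{d-1},\ldots,f_1$.

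The main obstacle is the factorisation of $W$ and its $d!$-divisibility: in the classical vector-space setting these come for free by polarising a polynomial and dividing by $d!$, but in a general abelian group both must be established directly from the geometry. The resolution is that the valuation structure provides integer-coefficient combinatorial identities (from dissecting $nX$ into $n^d$ pieces congruent to $X$ modulo translation) which replace the scalar division step; together with the torsion-vanishing lemma above, these identities let one define each $f_i$ as an honest continuous semigroup homomorphism and establish uniqueness of the decomposition by the same argument applied inductively.
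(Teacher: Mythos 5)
Your overall strategy --- reduce to the Fr\'echet/Djokovi\'c iterated-difference criterion, establish that criterion by a McMullen-style geometric decomposition, then extract homogeneous components by polarisation --- is essentially the one followed in Sections~\ref{section-general-polynomiality} and~\ref{section-McMullen} of the paper. But the step you flag as ``the main obstacle'', establishing $d!$-divisibility of $h$, has a real gap as written, and the gap masks the one observation that makes the whole argument go through. You propose to get $d!$-divisibility from ``an explicit integer-combinatorial formula for $W$ coming from the Minkowski dissection of $nX$ into $n^d$ near-translates.'' That dissection gives an integer formula for $\varphi(nX)$, but it offers no reason for the resulting expression to be a multiple of $d!$ in an arbitrary abelian group $A$; there is nothing forcing the combinatorial coefficients to conspire in that way. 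The point you have missed is that $d!$-divisibility of $h$ is automatic and costs nothing: $h$ is a continuous additive map on the \emph{domain} semigroup $[0,\infty)$, and multiplication by $d!$ is a \emph{bijection of} $[0,\infty)$. So simply put $f_d(\mu)=h(\mu/d!)$; then $d!\,f_d(\mu)=d!\,h(\mu/d!)=h(d!\cdot\mu/d!)=h(\mu)$ by additivity. This is exactly the ``invertibility condition holds for $A$'' branch of Theorem~\ref{existence-algebraic} in the paper, and it is the crucial device that lets one avoid dividing in the target group $M$. It also makes your separate uniqueness lemma (and the extension to $\bbR$) superfluous: if $d!\,f=0$ then $f(\mu)=d!\,f(\mu/d!)=0$ directly.

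Two lesser remarks. The paper proves the order-$(d+1)$ difference identity by a concrete two-sets-at-a-time simplex decomposition (Lemma~\ref{simplex-decomposition}) rather than via Groemer extension to polyconvex sets; your route is probably salvageable (Groemer's extension is algebraic and does not need a vector-space target) but the paper's version is more self-contained and requires no new extension theorem. Also, the factorisation $W(\lambda_1,\ldots,\lambda_d)=h(\lambda_1\cdots\lambda_d)$ needs no geometric input at all: it is a purely algebraic-plus-continuity fact about symmetric multi-additive continuous maps $[0,\infty)^d\to M$ (Proposition~\ref{proposition-multilinear}), proved by rescaling one argument by a natural number, passing to rationals, and using continuity and Hausdorffness. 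Your proposed route via a ``direct expansion of $\varphi(\sum\lambda_i X)$ on a Minkowski-sum dissection'' is an unnecessary detour whose output depends on the face structure of $X$, and it is not clear it would yield the claimed factorisation.
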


Applying this theorem to the universal valuation $\Phi\colon\K(V)\to\CB(V,G)$
and setting $\lambda=1$ gives us the splitting of $\Phi(X)$ into its components 
$\Phi_0(X)+\cdots+\Phi_d(X)$ as in Theorem~B, and Theorem~B 
in its full strength follows from a parameterised version of Theorem~C.

The difficulty in establishing Theorem~C
is that the usual definition of homogeneity cannot be used since the
target is not an $\bbR$-vector space.
We tackle this by using a notion of polynomiality,
defined in terms of iterated differences,
that applies to functions from abelian semigroups to abelian groups;
this method of recognising polynomiality goes back at least to
Fr\'echet~\cite{FrechetI,FrechetII}.

\subsection*{Comparison with the polytope algebra}

In~\cite{McMullenPolytope}, McMullen introduced the \emph{polytope algebra}
$\Pi(V)$ of a vector space $V$.
This is the free abelian group on the convex polytopes in $V$ modulo
translation and inclusion-exclusion relations,
made into an algebra under Minkowski addition.
It is closely related to the translation-invariant
convex body group $\CB(V,V)$, 
and indeed there is a canonical homomorphism of abelian groups
$\Pi(V)\to\CB(V,V)$ with dense image.

This paper can therefore be regarded as a 
first step in producing a ``convex body algebra'' analogous to $\Pi(V)$,
so that Theorem~B is then a partial analogue of one of the main structural
results on $\Pi(V)$~\cite[Theorem~1]{McMullenPolytope}.
The next step would be to consider the algebra structure that Minkowski sum
induces on $\CB(V,V)$.
Tensor products can be technically challenging in the topological setting,
and we expect that the convex body algebra will need to be placed
in a carefully chosen monoidal category of topological abelian groups
or vector spaces, such as stereotype spaces or condensed sets,
before an adequate theory can be established.

Comparing our methods with McMullen's, 
it seems that our approach of studying groups of convex bodies
by directly establishing 
an abelian group-valued version of McMullen polynomiality has no parallel 
in~\cite{McMullenPolytope},
and of course there are the technicalities arising from the 
fact that the $\CB(V,G)$ must be topologised while $\Pi(V)$ is not.

\subsection*{Outline of the paper}
Section~\ref{section-TAG} introduces topological abelian groups and 
the relevant facts and constructions we will need there.
Section~\ref{section-general-polynomiality} establishes the necessary background
material on polynomiality of maps with values in abelian groups,
and then section~\ref{section-McMullen} applies this to obtain a form of
McMullen polynomiality in this context, in particular proving
Theorem~C.
Section~\ref{section-grothendieck} introduces groups of convex bodies,
and finally section~\ref{section-structure} uses polynomiality to investigate
their structure, proving Theorems~A and~B.

\section{Topological abelian groups}\label{section-TAG}

In this brief section we will recall some necessary background on 
topological abelian groups.
For further reading we recommend Chapter~1 of Morris's book~\cite{Morris}
for generalities, 
and the introduction and section~1 of Thomas's paper~\cite{Thomas}
for material on free topological abelian groups.

Recall that a \emph{topological abelian group} 
$A$ is an abelian group equipped with a topology
such that addition and negation in $A$ define continuous maps 
$A\times A\to A$ and $A\to A$ respectively.
A topological abelian group $A$ is Hausdorff if and only if
$\{0\}\subseteq A$ is closed~\cite[p.5]{Morris}.
We will need to work with Hausdorff topological abelian groups in order
to use the uniqueness of limits.
We will abbreviate `topological abelian group' as TAG and
`Hausdorff topological abelian group' as HTAG.

For every TAG $A$ there is an HTAG $A/\overline{\{0_A\}}$, 
that we call its \emph{reflection};
the reflection is equipped with a canonical map $A\to A/\overline{\{0_A\}}$,
and any other continuous homomorphism $A\to B$ into an HTAG
factors uniquely as $A\to A/\overline{\{0_A\}}\to B$.
In practice this means that we can perform various constructions
(in particular, quotients and free HTAGs)
by first constructing the relevant TAG and then taking its reflection.
(The word reflection here is taken from the fact that the category
of HTAGs is a reflexive subcategory of the category of TAGs.)

Observe that if we have TAGs $A\supseteq B\supseteq C$,
then in the quotient group $A/C$ we have
$\overline{B/C} = \bar B/C$.
So in particular, given $A\supseteq B$, we have
$\overline{\{0_{A/B}\}}=\overline{B/B}=\bar B / B$, so that
$A/B$ is Hausdorff if and only if $B$ is closed.

We must now discuss free topological abelian groups.
This is a well-studied subject with many intricacies, 
but we only need the very basics.
The introduction and section~1 of~\cite{Thomas} provide us with
the existence theorem that we will need.

\begin{definition}[Free Hausdorff topological abelian group]\label{free-HTAG}
	Let $X$ be a topological space.
	The \emph{free Hausdorff topological abelian group on $X$},
	denoted $\bbZ X$, is an HTAG equipped with a continuous map
	$X\to\bbZ X$, with the following property:
	For any HTAG $A$, the map
	\[
		(\bbZ X\xrightarrow{\varphi} A)
		\longmapsto
		(X\to \bbZ X\xrightarrow{\varphi} A)
	\]
	is a bijection
	between the set of continuous homomorphisms $\bbZ X\to A$
	and the set of continuous maps $X\to A$.
	Given $x\in X$, we write $[x]\in\bbZ X$ for the image
	of $x$ under $X\to\bbZ X$.
\end{definition}

\begin{theorem}
	For any topological space $X$, the free Hausdorff topological
	abelian group $X\to\bbZ X$ exists.
\end{theorem}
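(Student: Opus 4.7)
The plan is to construct $\bbZ X$ via the standard trick for producing left adjoints into a category of algebras in topological spaces: realise it as a closed subgroup of a sufficiently large product of HTAGs. The central technical obstacle is set-theoretic, since the collection of all continuous maps from $X$ into HTAGs is a proper class; one needs a solution-set condition to cut it down to a set. This is essentially the content of Thomas's existence result cited in the paper, but let me sketch the route.

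First I would establish a solution-set bound: there exists a cardinal $\kappa$, depending only on $X$, such that every continuous map $f\colon X\to A$ from $X$ into an HTAG $A$ factors continuously through a sub-HTAG of $A$ of cardinality at most $\kappa$. The natural candidate is the closure in $A$ of the subgroup algebraically generated by $f(X)$. The algebraic subgroup has cardinality at most $\max(|X|,\aleph_0)$, and the closure in a Hausdorff group of a subset of cardinality $\lambda$ is bounded in terms of the weight of the group and $\lambda$; quantifying this bound is the genuine work, and is what makes the construction nontrivial.

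Second, I would fix a set $\mathcal{S}$ of representatives of pairs $(A,f)$ with $A$ an HTAG of cardinality at most $\kappa$ and $f\colon X\to A$ continuous. Form the product HTAG $P=\prod_{(A,f)\in\mathcal{S}}A$, together with the canonical continuous map $\iota\colon X\to P$ whose $(A,f)$-component is $f$. Define $\bbZ X$ to be the closure in $P$ of the subgroup algebraically generated by $\iota(X)$; as a closed subgroup of an HTAG it is itself an HTAG, and comes with the restriction of $\iota$ as its canonical map from $X$.

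Third, I would verify the universal property. Given a continuous $g\colon X\to A$ with $A$ an HTAG, the solution-set step lets us factor $g$ through some $(A',g')\in\mathcal{S}$, and the projection $P\to A'$ restricts to a continuous homomorphism $\bbZ X\to A'$; composing with $A'\hookrightarrow A$ yields the required extension. Uniqueness is automatic: two continuous homomorphisms $\bbZ X\to A$ agreeing on $\iota(X)$ agree on the subgroup it generates, and since $A$ is Hausdorff the equaliser is closed, so they agree on the whole of $\bbZ X$. The main obstacle in this programme is the solution-set step, which is precisely the delicate cardinality control for closures in HTAGs furnished by Thomas.
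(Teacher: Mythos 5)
Your construction is correct in outline but takes a genuinely different route from the paper. The paper (following Thomas) first builds the free \emph{topological} abelian group $A(X)$ by endowing the free abelian group $FA$ on $X$ with the supremum of all group topologies making the insertion $X\to FA$ continuous, and then passes to the Hausdorff reflection $A(X)/\overline{\{0_{A(X)}\}}$. Since the group topologies on the fixed set $FA$ form a set, this approach sidesteps cardinality questions entirely; the price is that Hausdorffness must be recovered afterwards by quotienting out $\overline{\{0\}}$. Your route is the product-embedding construction familiar from the adjoint functor theorem: establish a solution-set bound $\kappa$, embed $X$ into the product $P$ of all (representatives of) small HTAGs under $X$, and take the closed subgroup of $P$ generated by the image. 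This produces an HTAG directly and your verification of the universal property is correct; the price is the cardinal estimate, which is the only nontrivial input.

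On that estimate, your phrasing points in the wrong direction. You say the closure of a $\lambda$-sized set in a Hausdorff group is ``bounded in terms of the weight of the group and $\lambda$,'' but the weight of the ambient group $A$ is unbounded over all test objects, so such a bound cannot yield a $\kappa$ depending only on $X$. What you actually need is a bound in terms of $\lambda$ alone, and it exists: any Hausdorff space with a dense subset $D$ has cardinality at most $2^{2^{|D|}}$ (send $x$ to the filter $\{\,U\cap D : U \text{ open}, x\in U\,\}$ on $D$; Hausdorffness makes this assignment injective). Applying this to the closure of the subgroup algebraically generated by $f(X)$, whose cardinality is at most $\max(|X|,\aleph_0)$, gives $\kappa = 2^{2^{\max(|X|,\aleph_0)}}$, after which the rest of your argument goes through as stated.
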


For a proof, see Theorem~1.4 of~\cite{Thomas} and the paragraph that
follows it.
This constructs the free TAG $A(X)$, and then the free HTAG 
$\bbZ X$ is the reflection $A(X)/\overline{\{0_{A(X)}\}}$
of $A(X)$.
A brief alternative way to describe $A(X)$ is that it is obtained from the free
abelian group $FA$ on $X$ by considering all group
topologies for which the map $X\to FA$ is continuous, and then taking
the finest topology containing all of these.

In what follows we will have occasion to use `parameterised'
homomorphisms of HTAGs.
To achieve this we will make use of mapping spaces.
Recall that if $X$ and $Y$ are topological spaces, 
then the space $\C(X,Y)$ of continuous maps $X\to Y$ 
can be equipped with the \emph{compact-open topology}~\cite[p.285]{Munkres}.
This is the topology generated by the sets $S(C,U)=\{f\colon X\to Y\mid
f(C)\subseteq U\}$ for $C\subseteq X$ compact and $U\subseteq Y$ open.
Here are some basic properties of the compact-open topology:
\begin{enumerate}
	\item\label{co-hausdorff}
	If $Y$ is Hausdorff then so is $\C(X,Y)$.
	(See exercise~6 of~\cite[Ch.7]{Munkres}.)
	\item\label{co-products}
	If $X,Y_1,Y_2$ are topological spaces, then the map
	$\C(X,Y_1)\times\C(X,Y_2)\to \C(X,Y_1\times Y_2)$,
	$(f,g)\mapsto f\times g$, is continuous.
	(This is a simple application of the definition.)
	\item\label{co-pre-post}
	If $X',X,Y,Y'$ are topological spaces and $\alpha\colon X'\to X$
	and $\beta\colon Y\to Y'$ are continuous, then the map
	$\C(X,Y)\to\C(X',Y')$, $f\mapsto \beta\circ f\circ\alpha$
	is also continuous.
	(Again, this is a simple application of the definition.)
	\item\label{co-htag}
	Let $A$ be a Hausdorff topological abelian group,
	and let $X$ be a topological space.
	Then $\C(X,A)$, equipped with the operation of pointwise
	addition, is a Hausdorff topological abelian group.
	(This is proved by combining points~\ref{co-hausdorff},
	\ref{co-products} and~\ref{co-pre-post} with the fact that
	the structure maps of $A$ are continuous.)
	\item\label{co-curry}
	If $X,Y,Z$ are topological spaces with $X$ locally compact Hausdorff,
	then there is a bijection between continuous maps
	$X\times Y\to Z$ and continuous maps $Y\to\C(X,Z)$.
	The bijection sends $f\colon X\times Y\to Z$ to the map 
	$g\colon Y\to\C(X,Z)$ defined by $g(y)(x)=f(x,y)$.
	The assignment $f\mapsto g$ is called \emph{currying},
	and its inverse is called \emph{uncurrying}.
	(See Theorem~46.11 of~\cite{Munkres}.)
	\item\label{co-curry-htag}
	If $X$ is a locally compact Hausdorff space and $A,B$ are 
	HTAGs, then continuous maps $X\times B\to A$ that are homomorphisms
	in the second variable are in bijection with continuous homomorphisms
	$B\to\C(X,A)$, via the map
	$f\mapsto g$, $g(b)(x) = f(x,b)$.
	This is again called \emph{currying}, and its inverse is
	\emph{uncurrying}.
	(This is an immediate consequence of~\ref{co-curry} above.)
\end{enumerate}

Let us give a typical illustration of how we will use the framework
of function spaces and currying.

\begin{example}\label{example-curry}
	Let $X,Y$ be topological spaces, with $X$ locally compact Hausdorff,
	and let $A$ be an HTAG.
	Then continuous maps $g\colon X\times \bbZ Y\to A$ for which
	all $g(x,-)$ are homomorphisms are in bijection with
	continuous maps $f\colon X\times Y\to A$.
	To see this, observe that by currying, continuous maps 
	$g\colon X\times \bbZ Y\to A$ for which
	each $g(x,-)$ is a homomorphism are in bijection with continuous
	homomorphisms $\bbZ Y\to\C(X,A)$.
	And similarly, again by currying,
	continuous maps $f\colon X\times Y\to A$
	are in bijection with continuous maps $Y\to\C(X,A)$.
	But now continuous homomorphisms $\bbZ Y\to\C(X,A)$ are in bijection
	with continuous maps $Y\to\C(X,A)$ by the universal property of 
	the free HTAG, and this establishes the claim.
\end{example}

\section{Polynomial functions of abelian (semi)groups}
\label{section-general-polynomiality}

In this section we will establish the general theory
of polynomiality that we need in the rest of the paper.
This theory will allow us to recognise and describe `polynomial'
functions that have values in an abelian group, despite the absence
of any notion of multiplication or exponentiation.
The key idea here is to 
recognise polynomiality in terms of the vanishing of iterated differences.
\emph{Understanding} polynomials by means of differences between their values
has a long history, at least as far back as Newton's method of divided 
differences, while \emph{characterising} polynomials by means of differences 
seems to go back to Fr\'echet~\cite{FrechetI,FrechetII}.
The most relevant reference for us is Djokovi\'c's paper~\cite{Djokovic},
and we will recall and extend the work of that paper throughout the section.

\subsection{Polynomial functions on $[0,\infty)$}

We begin by stating the results that will be used
in the remainder of the paper.

\begin{definition}[Polynomial expansions]
\label{polynomial}
	Let $M$ be an abelian group.
	A \emph{polynomial expansion} of a function 
	$f\colon [0,\infty)\to M$ is an expression of the form
	\[
		f(a) = f_0 + f_1(a^1) +\cdots + f_d(a^d),
		\qquad a\in A
	\]
	for some constant $f_0\in A$ 
	and some homomorphisms $f_1,\ldots,f_d\colon [0,\infty)\to M$.
	We call the $f_i$ the \emph{components} of the polynomial
	expansion.
\end{definition}

When they exist, such polynomial expansions are unique:

\begin{proposition}[Uniqueness of polynomial expansions]
\label{uniqueness}
	Let $M$ be an abelian group.
	Suppose that $f\colon [0,\infty)\to M$ has two polynomial expansions
	\[
		f(a) = f_0+f_1(a^1)+\cdots+f_d(a^d)
		\quad\text{and}\quad
		f(a) = g_0+g_1(a^1)+\cdots+g_{d'}(a^{d'}).
	\]
	Then $d=d'$ and $f_i = g_i$ for $i=0,\ldots,d$.
\end{proposition}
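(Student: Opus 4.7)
The plan is to reduce to proving that an identically zero polynomial expansion must have all components zero, and then to proceed by induction on the degree. Padding the shorter expansion with zero homomorphisms if necessary, we may assume $d=d'$; subtracting the two expansions then reduces the problem to showing that
\[
    h_0 + h_1(a) + h_2(a^2) + \cdots + h_d(a^d) = 0, \qquad a \in [0,\infty),
\]
where $h_0 \in M$ is a constant and each $h_i$ is a homomorphism, forces $h_0 = 0$ and $h_i = 0$ for all $i$. I will prove this by induction on $d$, the case $d=0$ being immediate.

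For the inductive step, setting $a=0$ and using that every semigroup homomorphism $h_i \colon [0,\infty) \to M$ satisfies $h_i(0)=0$ yields $h_0 = 0$. For each positive integer $n$ and each $x \in [0,\infty)$, substituting $a = nx$ and applying $\bbZ_{\geq 0}$-linearity of the $h_i$ (so that $h_i((nx)^i) = n^i h_i(x^i)$) gives
\[
    P_x(n) := \sum_{i=1}^{d} n^i \, h_i(x^i) = 0.
\]
Since $P_x(0)=0$ automatically, $P_x$ vanishes on all of $\bbZ_{\geq 0}$, and hence so does its iterated finite difference $\Delta^d P_x$, where $\Delta P_x(n) = P_x(n+1) - P_x(n)$. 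On the other hand a standard computation shows that $\Delta$ decreases the degree by one and multiplies the leading coefficient by the old degree, so $\Delta^d P_x$ is the constant $d! \, h_d(x^d)$. Thus $d!\, h_d(x^d) = 0$ for every $x \geq 0$, and surjectivity of $x \mapsto x^d$ on $[0,\infty)$ upgrades this to $d!\, h_d(y) = 0$ for every $y \geq 0$.

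The decisive step is to pass from this torsion-type identity to actual vanishing, which we do using the divisibility of the semigroup $[0,\infty)$: for any $y \geq 0$ we may write $y = d! \cdot (y/d!)$ and therefore
\[
    h_d(y) = h_d\bigl(d! \cdot (y/d!)\bigr) = d!\, h_d(y/d!) = 0.
\]
Hence $h_d \equiv 0$, the original identity becomes a polynomial expansion of degree at most $d-1$, and the inductive hypothesis delivers $h_1 = \cdots = h_{d-1} = 0$.

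The subtle point is that a naive Vandermonde argument is not available over a general abelian group: the polynomial $n - n^2$ with coefficients in $\bbZ/2$ vanishes at every integer yet has nonzero coefficients. What saves the argument is the combination of $\bbZ_{\geq 0}$-linearity of the $h_i$ together with divisibility of the domain $[0,\infty)$, which together let us absorb the factor of $d!$ after establishing that $d! \cdot h_d$ is identically zero.
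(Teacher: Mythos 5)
Your proof is correct. It rests on the same two pillars as the paper's argument --- iterated differencing to isolate $d!$ times the top component, followed by divisibility of $[0,\infty)$ to absorb the factor of $d!$ --- but the implementation is genuinely different. The paper first rewrites $f_i(a^i)$ as the diagonalisation of the symmetric multi-homomorphism $g_i(a_1,\ldots,a_i)=f_i(a_1\cdots a_i)$ and then invokes the algebraic uniqueness statement (Proposition~\ref{uniqueness-algebraic}), whose proof applies the semigroup difference operators $\Delta_{u_1}\cdots\Delta_{u_d}$ directly on $[0,\infty)$ via identity~\eqref{equation-iterated-difference} to extract $d!\,g_d(u_1,\ldots,u_d)=0$, and then replaces $u_1$ by $u_1/d!$. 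You instead freeze $x$, introduce the auxiliary integer-parameter function $P_x(n)=\sum_i n^i h_i(x^i)$, and take ordinary finite differences over $\bbZ_{\geqslant 0}$ to reach $d!\,h_d(x^d)=0$, finishing with the surjectivity of $x\mapsto x^d$ and the same divisibility trick. Your route is more elementary and self-contained --- it never needs the symmetric multilinear machinery --- and your closing remark about why a naive Vandermonde argument fails over a general abelian group correctly identifies the essential obstruction. What the paper's route buys is generality: its algebraic proposition applies to an arbitrary abelian semigroup $A$ (or to a divisible target $M$), which is the form it reuses elsewhere, whereas your argument is tailored to $[0,\infty)$ --- exactly the case the stated proposition concerns.
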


Let $M$ be an abelian group.
For $u\in [0,\infty)$ the \emph{difference operator}
$\Delta_u$ takes a function $f\colon [0,\infty)\to M$ into the function
$\Delta_u f\colon [0,\infty)\to M$ defined by
\[
	(\Delta_u f)(a) = f(a+u)-f(a)
\]
for $a\in [0,\infty)$.
Our next result is a criterion for the existence of polynomial
expansions, given in terms of these difference operators.
It includes topologies
\emph{and} parameters, which is essential for the applications,
although it adds some awkwardness to the phrasing.

\begin{theorem}[Existence of polynomial expansions]
\label{existence}
	Let $X$ be a topological space, 
	let $M$ be a topological abelian group,
	and let $f\colon [0,\infty)\times X\to M$ be 
	a continuous function such that the following condition holds
	for all $x\in X$:
	\[
		\Delta_{u_1}\cdots \Delta_{u_{n+1}} f(-,x)=0
		\qquad\text{for\ all\ }
		u_1,\ldots,u_{n+1}\in [0,\infty)
	\]
	Then 
	\[
		f(a,x)= f_0(x) + f_1(a^1,x)+\cdots+f_n(a^n,x)
	\]
	for all	$(a,x)\in A\times X$, where
	$f_0\colon X\to M$ and $f_1,\ldots,f_n\colon [0,\infty)\times X\to M$
	are continuous functions, and each of
	$f_1,\ldots,f_n$ is a homomorphism in its first variable.
\end{theorem}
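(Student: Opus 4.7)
The plan is to prove this by induction on $n$, promoting the classical (non-parametrised) Djokovi\'c theorem~\cite{Djokovic} to joint continuity in the parameter. The base case $n=0$ is immediate: $\Delta_u f(-,x)=0$ forces $f(a,x)=f(0,x)$, so $f_0(x):=f(0,x)$ gives the expansion.

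For the inductive step, introduce the auxiliary continuous function $g\colon[0,\infty)^2\times X\to M$ defined by $g(a,u,x):=f(a+u,x)-f(a,x)$. A direct calculation gives
\[
\Delta_{v_1}\cdots\Delta_{v_n}\, g(-,u,x) = \Delta_u\Delta_{v_1}\cdots\Delta_{v_n}\, f(-,x) = 0,
\]
so for each fixed $(u,x)$ the function $g(-,u,x)$ satisfies the degree-$(n{-}1)$ hypothesis. Applying the inductive hypothesis to $g$ with parameter space $Y:=[0,\infty)\times X$ yields a continuous decomposition
\[
g(a,u,x)=\tilde g_0(u,x)+\sum_{i=1}^{n-1}\tilde g_i(a^i,u,x),
\]
with each $\tilde g_i$ additive in its first slot. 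Setting $a=0$ and using $\tilde g_i(0,u,x)=0$ identifies $\tilde g_0(u,x)=f(u,x)-f(0,x)$; set $f_0(x):=f(0,x)$.

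To extract the components $f_1,\dots,f_n$ of $f$, I would use the explicit recursive limit formulas
\[
f_j(t^j,x):=\lim_{N\to\infty}\Bigl[N^j\bigl(f(t/N,x)-f_0(x)\bigr)-\sum_{i=1}^{j-1}N^{j-i}f_i(t^i,x)\Bigr], \qquad j=1,\dots,n,
\]
each defined in terms of the previously constructed $f_1,\dots,f_{j-1}$. For each fixed $x$, the pointwise Djokovi\'c theorem gives an expansion $f(a,x)=f_0(x)+\sum_i f_i(a^i,x)$ with $f_i(-,x)$ continuous and additive. Substituting $a=t/N$ and using integer-additivity $N^j f_i(t^i/N^i,x)=f_i(t^iN^{j-i},x)$, the bracket simplifies to $f_j(t^j,x)+\sum_{i>j}f_i(t^i/N^{i-j},x)$, and the tail vanishes uniformly on compacta by continuity of $f_i$ together with $t^i/N^{i-j}\to 0$. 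This gives joint continuity of $f_j$ in $(t,x)$; additivity in $t^j$ is inherited from the pointwise decomposition, and the polynomial expansion for $f$ follows.

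The hard part is the absence of any divisibility hypothesis on $M$: natural Vandermonde-style algebraic extractions of $f_j$ from $f$ only determine integer multiples $k\cdot f_j$, leaving a torsion ambiguity when $M$ is not divisible. The limit formula bypasses this using continuity, with the pointwise Djokovi\'c theorem guaranteeing existence of the limit and identifying it with the correct additive component.
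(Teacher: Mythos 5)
Your reduction to the pointwise Djokovi\'c theorem and your identification of $f_0$ are fine, and the algebra behind your limit formula is correct: for fixed $x$ the bracket does equal $f_j(t^j,x)+\sum_{i>j}f_i(t^i/N^{i-j},x)$, so the limit exists and has the right value. The gap is in the claim that this yields \emph{joint} continuity of $f_j$. For that you need the convergence to be locally uniform in $(t,x)$, i.e.\ you need the tail $\sum_{i>j}f_i(t^i/N^{i-j},x)$ to tend to $0$ uniformly for $x$ in a compact set. Continuity of each $f_i(-,x)$ at $0$ for \emph{fixed} $x$ only gives pointwise (in $x$) convergence to $0$; uniformity over compacta in $x$ is precisely equicontinuity of the family $\{f_i(-,x)\}$ at $0$, which is what joint continuity of $f_i$ at $(0,x)$ would provide --- but the $f_i$ with $i>j$ are constructed \emph{after} $f_j$ in your recursion, so you are assuming the conclusion. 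Already at the first step $j=1$ the error term involves all of $f_2,\dots,f_n$, none of which is yet known to be jointly continuous, and without local uniformity a pointwise limit of continuous functions into a topological abelian group need not be continuous.

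The way out is to extract components from the top down by a \emph{finite} formula rather than from the bottom up by a limit. Since the $(n+1)$-fold differences of $f(-,x)$ vanish, $h(u_1,\dots,u_n,x):=\Delta_{u_1}\cdots\Delta_{u_n}f(-,x)$ is constant in the remaining variable and is a finite $\bbZ$-linear combination of values of $f$ at $\bbZ$-linear combinations of the $u_i$, hence jointly continuous; it is symmetric and additive in each $u_i$, and setting $f_n(u_1,\dots,u_n,x):=(n!)^{n-1}h(u_1/n!,\dots,u_n/n!,x)$ gives $\Delta_{u_1}\cdots\Delta_{u_n}f_n^{\ast}=h$, where the required division by $n!$ happens in the uniquely divisible domain $[0,\infty)$ rather than in $M$ --- this is exactly how the divisibility obstruction you correctly identified is circumvented. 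One then replaces $f$ by $f-f_n^{\ast}$ and descends, and finally converts the diagonalised multiadditive components into functions of $a^i$. Your auxiliary function $g(a,u,x)=f(a+u,x)-f(a,x)$ could also be pushed through to the same effect (its top component recovers $nf_n(a^{n-1}u,x)$, and $f_n(s,x)=nf_n(s/n,x)$ again divides in the domain), but as written you set up that induction and then do not use it to produce $f_1,\dots,f_n$.
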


The remainder of this section is dedicated to the proofs of the above results.

\subsection{Algebraic polynomiality}
We begin with a more general algebraic version of the existence and
uniqueness of polynomial expansions.
Until further notice, let $A$ be an abelian semigroup and $M$ an abelian group.

For $u\in A$ the \emph{difference operator}
$\Delta_u$ takes a function $f\colon A\to M$ into the function
$\Delta_u f\colon A\to M$ defined by
\[
	(\Delta_u f)(a) = f(a+u)-f(a)
\]
for $a\in A$.
The difference operator satisfies the identities
\begin{gather}
	\Delta_u\Delta_v = \Delta_v\Delta_u
	\label{delta-relation-one}
	\\
	\Delta_{u+v}-\Delta_u-\Delta_v = \Delta_u\Delta_v
	\label{delta-relation-two}
\end{gather}
for any $u,v\in A$~\cite[Lemma~2]{Djokovic}.
If $f\colon A^n\to M$ is symmetric, and a homomorphism in each of its variables,
then for any $u_1,\ldots,u_p\in A$ we have:
\begin{equation}\label{equation-iterated-difference}
	\Delta_{u_1}\cdots\Delta_{u_p} f 
	=
	\begin{cases}
		0 & \text{if\ }p>n
		\\
		n! f(u_1,\ldots,u_n) & \text{if\ }p=n
	\end{cases}
\end{equation}
In the second case, the notation indicates that 
$\Delta_{u_1}\cdots\Delta_{u_p} f$ is the constant function
with value $n! f(u_1,\ldots,u_n)$.
See~\cite[pp.193-194]{Djokovic}.

In the present setting we need a generalised notion of polynomial
expansion that we phrase as follows.
Given $f\colon A^n\to M$, the \emph{diagonalisation} of $f$,
denoted $f^\ast\colon A\to M$, 
is defined by
\[
	f^\ast(a)=f(a,\ldots,a)
\]
for $a\in A$.
(When $f\colon A^n\to M$ is a homomorphism in each variable,
we can think of $f^\ast$ as a kind of `monomial of degree $n$'.)

\begin{definition}\label{polynomial-algebraic}
	A \emph{polynomial expansion} of a function 
	$f\colon A\to M$ is an expression of the form
	\[
		f(a) = f_0^\ast(a) + f_1^\ast(a) +\cdots + f_d^\ast(a),
		\qquad a\in A
	\]
	where each $f_i\colon A^i\to M$ is symmetric and a homomorphism
	in each variable.
	(Note that the domain of $f_0$ is $A^0$,
	so that $f_0$ and $f_0^\ast$ are simply constants.)
\end{definition}

The next two results are the algebraic versions of 
Proposition~\ref{uniqueness} and Theorem~\ref{existence}.
In order to take account of the factor $n!$ appearing 
in~\eqref{equation-iterated-difference}, they rely on an
invertibility condition holding for one of $A$ and $M$.
The case where $M$ satisfies the invertibility condition is essentially
Theorem~3 of~\cite{Djokovic}.
The case where $A$ satisfies the invertibility condition
--- this is the case we need for the rest of the paper ---
follows~\cite{Djokovic} closely with some amendments to the details.

\begin{proposition}
\label{uniqueness-algebraic}
	Suppose that one of $A$ and $M$ satisfies the 
	\emph{invertibility condition} that for each $d\geqslant 1$
	the self-map $x\mapsto d!x$ is invertible.
	Suppose that $f\colon A\to M$ has two polynomial expansions
	\[
		f(a) = \sum_{i=0}^d f_i^\ast(a)
		\quad\text{and}\quad
		f(a) = \sum_{j=0}^{d'}g_j^\ast(a).	
	\]
	Then $d=d'$ and $f_i = g_i$ for $i=0,\ldots,d$.
\end{proposition}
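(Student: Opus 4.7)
The plan is to reduce the proposition to showing that any identity of the form $h_0^\ast + h_1^\ast + \cdots + h_d^\ast \equiv 0$, with each $h_i\colon A^i \to M$ symmetric and a homomorphism in each variable, forces every $h_i$ to vanish. Assuming $d \geqslant d'$ without loss of generality and padding the shorter expansion with zero symmetric multilinear maps, I set $h_i = f_i - g_i$; these are again symmetric and multilinear, and the difference of the two expansions is precisely such an identity.

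To isolate the top component, I apply $\Delta_{u_1}\cdots\Delta_{u_d}$ to both sides of $\sum_i h_i^\ast \equiv 0$, for arbitrary $u_1, \ldots, u_d \in A$. By \eqref{equation-iterated-difference}, every $h_i^\ast$ with $i < d$ is annihilated (since then $p = d > i = n$), while $\Delta_{u_1}\cdots\Delta_{u_d} h_d^\ast$ is the constant function with value $d! \, h_d(u_1, \ldots, u_d)$. This yields the identity
\[
    d! \, h_d(u_1, \ldots, u_d) = 0 \quad \text{in } M
\]
for every tuple $(u_1, \ldots, u_d) \in A^d$. Once $h_d$ is shown to vanish from here, the argument repeats on $\sum_{i=0}^{d-1} h_i^\ast \equiv 0$, and downward induction on $d$ yields $h_i = 0$ for every $i$, establishing both $d = d'$ (up to the padding) and $f_i = g_i$.

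The only real subtlety lies in cancelling the factor $d!$. If $M$ satisfies the invertibility condition, then multiplication by $d!$ is injective on $M$ and the conclusion $h_d \equiv 0$ is immediate, as in Djokovi\'c \cite{Djokovic}. If instead the condition holds in $A$, I choose $v_1 \in A$ with $d! \, v_1 = u_1$ and use that $h_d$ is a homomorphism in its first slot: $h_d(u_1, u_2, \ldots, u_d) = d! \, h_d(v_1, u_2, \ldots, u_d)$, and this is zero by the displayed identity applied with $v_1$ in place of $u_1$. Transferring the divisibility from the domain to the target through this single application of multilinearity is the only step requiring any real thought; the remainder is formal manipulation of the commuting difference operators.
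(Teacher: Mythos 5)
Your proof is correct and follows essentially the same route as the paper: reduce to a single identity $\sum_i h_i^\ast\equiv 0$, use the iterated difference formula \eqref{equation-iterated-difference} to extract $d!\,h_d(u_1,\ldots,u_d)=0$, and cancel the factorial either by invertibility in $M$ or by replacing $u_1$ with its preimage under $a\mapsto d!\,a$ in $A$ and invoking additivity in the first slot. The only cosmetic difference is that you subtract the two expansions explicitly and pad with zeros, whereas the paper simply says it suffices to treat the case where all $g_i$ vanish.
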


\begin{proof}
	It is sufficient to consider the case where the $g_i$ all vanish,
	so that we must show that the $f_i$ also all vanish.
	Observe from \eqref{equation-iterated-difference} that 
	$d! f_d(u_1,\ldots,u_d) = 0$ for all $u_1,\ldots,u_d\in A$.
	When the invertibility condition holds for $M$ we then obtain
	$f_d(u_1,\ldots,u_d)=0$ immediately.
	When the condition holds for $A$
	we can replace $u_1$ with $\frac{u_1}{d!}$,
	where $\frac{u_1}{d!}$ denotes the image of $u_1$ under the inverse
	of $a\mapsto d!a$.
	This gives us
	$f_d(u_1,\ldots,u_d)
	=d! f_d(\frac{u_1}{d!},\ldots,u_d)
	= 0$
	by additivity of $f_d$ in its first variable.
	Thus $f_d=0$, and repeating the process shows that the $f_i$
	all vanish.
\end{proof}

\begin{theorem}\label{existence-algebraic}
	Suppose that one of $A$ and $M$ satisfies the 
	\emph{invertibility condition} that for each $d\geqslant 1$
	the self-map $x\mapsto d!x$ is invertible.
	Then any function $f\colon A\to M$ that satisfies the condition
	\begin{equation}\label{equation-vanishing-condition}
		\Delta_{u_1}\cdots\Delta_{u_{n+1}} f =0
		\text{\ for\ all\ }u_1,\ldots,u_{n+1}\in A
	\end{equation}
	for some $n\geqslant 1$ has a polynomial expansion
	\[
		f = \sum_{k=0}^n f_k^\ast
	\]
	where each $f_k\colon A^k\to M$ is symmetric, 
	and is a homomorphism in each of its variables.
\end{theorem}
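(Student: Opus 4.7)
The plan is to induct on $n$, following Djokovic's strategy of peeling off the top-degree component but with a modified construction in the case where $A$ (rather than $M$) carries the invertibility. The base case $n=0$, where $\Delta_u f = 0$ for all $u$ forces $f$ to be constant (using the fact that $A + A = A$ for a divisible semigroup, or directly when $A$ is a monoid), is trivial.

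For the inductive step, suppose $\Delta^{n+1} f = 0$ and the conclusion holds with $n-1$ in place of $n$. I would define
\[
F(u_1, \ldots, u_n) = (\Delta_{u_1}\cdots\Delta_{u_n} f)(a_0),
\]
which is independent of $a_0 \in A$ by the vanishing hypothesis. Symmetry of $F$ is immediate from \eqref{delta-relation-one}. Multilinearity in the first argument follows from \eqref{delta-relation-two}: expanding $\Delta_{u_1 + v_1}$ produces an extra term $\Delta_{u_1}\Delta_{v_1}\Delta_{u_2}\cdots\Delta_{u_n} f$, which is an $(n+1)$-fold difference and vanishes; symmetry then upgrades this to multilinearity in every variable. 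The heart of the argument is to construct a symmetric multilinear $f_n\colon A^n \to M$ with $n! f_n = F$. Once this is done, setting $g = f - f_n^\ast$ and invoking the identity \eqref{equation-iterated-difference} gives $\Delta^n g = F - n! f_n = 0$, so the inductive hypothesis applied to $g$ produces $g = \sum_{k=0}^{n-1} f_k^\ast$ and hence $f = \sum_{k=0}^{n} f_k^\ast$.

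When $M$ has the invertibility condition, one simply takes $f_n = F/n!$. When $A$ has it instead, I propose the symmetric formula
\[
f_n(u_1, \ldots, u_n) = (n!)^{n-1}\, F\!\left(\tfrac{u_1}{n!},\, \tfrac{u_2}{n!},\, \ldots,\, \tfrac{u_n}{n!}\right).
\]
This is well-defined by $A$-divisibility, manifestly symmetric (as $F$ is), and multilinear in each slot (precomposition with the homomorphism $u \mapsto u/n!$ preserves linearity). Pulling the scalar $n!$ into one slot at a time via multilinearity of $F$ gives $n! f_n = (n!)^n F(u_1/n!, \ldots, u_n/n!) = F(u_1, \ldots, u_n)$, as required. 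I expect this to be the main point of care in the proof: naive candidates such as $F(u_1/n!, u_2, \ldots, u_n)$ are multilinear but not symmetric, and symmetrising them would demand an inversion of $n!$ in $M$ that is unavailable. Dividing every slot by $n!$ in $A$ and compensating by the scalar $(n!)^{n-1}$ on the target --- allowed because $F$ is multilinear, not because $M$ has any divisibility --- is what simultaneously delivers symmetry and the identity $n! f_n = F$.
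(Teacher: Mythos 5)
Your proof is correct and follows essentially the same route as the paper: induction on $n$, peeling off the top component via the constant $n$-fold difference, and in the $A$-invertible case using the very same formula $f_n(u_1,\ldots,u_n)=(n!)^{n-1}F(u_1/n!,\ldots,u_n/n!)$. Your added justification that $n!f_n=F$ (pulling $n!$ into the slots one at a time) and your remark on why the naive asymmetric candidate fails are exactly the details the paper leaves implicit.
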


\begin{proof}
	The result is proved by induction on $n$,
	the case $n=0$ being immediate.
	Suppose now that $n>0$, and that existence is shown for all smaller
	values of $n$, and suppose given $f$ satisfying
	condition~\eqref{equation-vanishing-condition}.
	Define $h\colon A^n\to M$ by 
	$h(u_1,\ldots,u_n)=\Delta_{u_1}\cdots \Delta_{u_n}f$.
	Condition~\eqref{equation-vanishing-condition}
	means that the right hand side is indeed a constant.
	Equations~\eqref{delta-relation-one} and~\eqref{delta-relation-two}
	can be used to show that $h$ is symmetric, 
	and a homomorphism in each variable, 
	as in the proof of~\cite[Theorem~3]{Djokovic}.
	Define $f_n\colon A^n\to M$ by 
	$f_n(u_1,\ldots,u_n)=\frac{1}{n!}h(u_1,\ldots, u_n)$
	in the case that $M$ satisfies the invertibility assumption,
	and by 
	$f_n(u_1,\ldots,u_n)=(n!)^{n-1}h(\frac{u_1}{n!},\ldots,\frac{u_n}{n!})$
	in the case that $A$ satisfies the invertibility assumption.
	In each case we see that $f_n$ is symmetric, 
	and a homomorphism in each variable,	
	and using~\eqref{equation-iterated-difference} we see that
	$\Delta_{u_1}\cdots \Delta_{u_n} f_n^\ast 
	= \Delta_{u_1}\cdots\Delta_{u_n} f$.
	So now the function $f-f_n^\ast$ 
	satisfies~\eqref{equation-vanishing-condition}
	with $n$ replaced by $n-1$, and so by the inductive hypothesis
	can be written $f - f_n^\ast = \sum_{k=0}^{n-1} f_k^\ast$.
	This completes the proof.
\end{proof}

\subsection{Specialising to $[0,\infty)$}

We are now ready to specialise to the case $A=[0,\infty)$.
In order to get from diagonalisations $f_i^\ast(a)=f_i(a,\ldots,a)$,
which appear in the algebraic results of the last section, 
to functions of the form $f_i(a^i)$, which appear in our main results,
we use the following proposition.

\begin{proposition}\label{proposition-multilinear}
	Let $M$ be a Hausdorff topological abelian group.
	Let $f\colon [0,\infty)^n\to M$ be a function that in each variable
	is a continuous homomorphism.
	Then for any $i\neq j$ 
	and any $a,\lambda_1,\ldots,\lambda_n\in[0,\infty)$,
	we have
	\[
		f(\lambda_1,\ldots,a\lambda_i,\ldots,\lambda_n)
		=
		f(\lambda_1,\ldots,a\lambda_j,\ldots,\lambda_n).
	\]
	Consequently 
	$f(\lambda_1,\ldots,\lambda_n) 
	= 
	g(\lambda_1\cdots \lambda_n)$
	where $g\colon [0,\infty)\to M$ is the homomorphism defined by
	$g(\lambda) = f(\lambda,1,\ldots,1)$.
\end{proposition}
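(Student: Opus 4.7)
My plan is to reduce the first assertion to a bihomomorphism on $[0,\infty)^2$, establish it there by first treating rational scalars and then passing to a continuous limit, and finally iterate the first assertion to deduce the ``consequently'' part.

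First I would freeze every coordinate of $f$ other than positions $i$ and $j$ and work with $F\colon[0,\infty)^2\to M$, $F(x,y)=f(\lambda_1,\ldots,x,\ldots,y,\ldots,\lambda_n)$, which is continuous and a homomorphism in each variable. The first claim then reduces to the statement that any such $F$ satisfies $F(au,v)=F(u,av)$ for all $u,v,a\in[0,\infty)$.

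The substantive step is the rational case, for which I would use the following trick: set $w=F(u/q,v/q)$. Being a homomorphism in the first variable forces $qw=F(u,v/q)$, and being a homomorphism in the second forces $qw=F(u/q,v)$, so $F(u/q,v)=F(u,v/q)$; multiplying through by a nonnegative integer $p$ via additivity then covers $a=p/q$. The main obstacle I expect is precisely this step --- a direct ``divide by $q$'' argument fails because $M$ need not be divisible or torsion-free, so one has to notice that the two sides admit a common witness in $M$ rather than trying to define $F(u,v)/q$ intrinsically.

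Passing from rational to arbitrary $a\in[0,\infty)$ is then routine: approximate by nonnegative rationals $a_k\to a$, apply continuity of $F$ on each side, and use Hausdorffness of $M$ to compare the two limits. For the ``consequently'' statement I would iterate the first claim with $(i,j)=(2,1),(3,1),\ldots,(n,1)$, each time writing $\lambda_k=\lambda_k\cdot 1$ and absorbing the factor into the first slot, to conclude $f(\lambda_1,\ldots,\lambda_n)=f(\lambda_1\cdots\lambda_n,1,\ldots,1)$. That the function $g(\lambda)=f(\lambda,1,\ldots,1)$ is a homomorphism is immediate from additivity of $f$ in its first variable.
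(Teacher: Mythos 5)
Your proposal is correct and follows essentially the same route as the paper: both establish the identity for natural numbers (your common witness $qw$ is the paper's observation that both sides equal $aF(u,v)$), divide in the domain $[0,\infty)$ rather than in $M$ to handle rationals, and then pass to arbitrary $a$ by continuity in each variable together with Hausdorffness of $M$. The iteration for the ``consequently'' part is likewise the intended argument.
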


\begin{proof}
	For simplicity take $n=2$.
	If $a$ is a natural number then 
	$f(a\lambda_1,\lambda_2)
	= f(\lambda_1,a\lambda_2)$
	since both are equal to 
	$af(\lambda_1,\lambda_2)$.
	And by applying this property to $\lambda_1/a,\lambda_2/a$ we 
	obtain
	$f(\lambda_1,\lambda_2/a) = f(\lambda_1/a,\lambda_2)$.
	So altogether we obtain the required identity when $a$ is rational.
	Now let $a\in [0,\infty)$ be arbitrary, and let $(a_n)$ be a sequence
	of non-negative rationals converging to $a$.
	Then since $f$ is continuous in each variable, we have
	\begin{align*}
		f(a\lambda_1,\lambda_2)
		&=
		f(\lim_{n\to\infty} a_n \lambda_1,\lambda_2)
		\\
		&=
		\lim_{n\to\infty} f(a_n \lambda_1,\lambda_2)
		\\
		&=
		\lim_{n\to\infty} f(\lambda_1,a_n\lambda_2)
		\\
		&=
		f(\lambda_1,\lim_{n\to\infty} a_n\lambda_2)
		\\
		&=
		f(\lambda_1, a\lambda_2).
	\end{align*}
	Here we have used the fact that $M$ is Hausdorff to ensure that
	limits are uniquely defined.
\end{proof}

\begin{proof}[Proof of Theorem~\ref{uniqueness}]
	Observe that a polynomial expansion 
	$f(a)=f_0+f_1(a^1)+\cdots +f_d(a^d)$
	in the sense of Definition~\ref{polynomial}
	is an expansion $f(a)=g_0^\ast(a)+\cdots+g_d^\ast(a)$
	in the sense of Definition~\ref{polynomial-algebraic},
	where we define $g_i\colon [0,\infty)^i\to M$ by
	$g_i(a_1,\ldots,a_i) = f_i(a_1\cdots a_i)$.
	Thus Theorem~\ref{uniqueness} follows directly from 
	Theorem~\ref{uniqueness-algebraic}.
\end{proof}

\begin{proof}[Proof of Theorem~\ref{existence}]
	Applying Theorem~\ref{existence-algebraic} to the function
	$f(-,x)$ for each $x\in X$, we obtain a polynomial expansion
	in the sense of Definition~\ref{polynomial-algebraic}
	\[
		f(a,x) = \sum_{k=0}^n f_k(a,\ldots,a,x)
	\]
	for $(a,x)\in[0,\infty)\times X$,
	where each $f_k\colon A^k\times X\to M$ is symmetric
	in its first $k$ variables, 
	and a homomorphism in each of its first $k$ variables.

	To see that the $f_i$ are continuous,
	we examine the proof of Theorem~\ref{existence-algebraic}
	and see that $f_n$ is described in terms of the function
	$h(u_1,\ldots,u_n,x)=\Delta_{u_1}\cdots\Delta_{u_n}f(-,x)$,
	which is continuous by inspection, being a $\bbZ$-linear combination
	of $f(-,x)$ evaluated on $\bbZ$-linear combinations of the $u_i$.
	The passage from $h$ to $f_n$ is given by inverting the self-map
	$a\mapsto n!a$, which is a homeomorphism of $[0,\infty)$, 
	and this is sufficient to show that $f_n$ is continuous.
	Replacing $f$ by $f-f^\ast_n$ and repeating this argument,
	it follows that the $f_i$ are all continuous.

	Finally, define $g_1,\ldots,g_n\colon [0,\infty)\times X\to M$
	by $g_i(a,x) = f_i(a,1,\ldots,1,x)$ for $(a,x)\in[0,\infty)\times X$,
	and $g_0=f_0\colon X\to M$.  Then the $g_i$ are all continuous,
	and by Proposition~\ref{proposition-multilinear} we have
	$g_i(a^i,x) = f_i(a,\ldots,a,x)$ for all $(a,x)\in [0,\infty)\times X$,
	and consequently 
	\[
		f(a,x)=g_0(x)+g_1(a^1,x)+\cdots+g_n(a^n,x)
	\]
	is the required polynomial expansion in the sense of 
	Definition~\ref{polynomial}.
\end{proof}

\section{McMullen polynomiality with values in a topological
abelian group}\label{section-McMullen}

In this section we will prove a version of McMullen polynomiality
that applies to continuous translation-invariant valuations with 
values in an arbitrary Hausdorff topological abelian group.
The main result is the following, which shows that the
functions $\lambda\mapsto\varphi(\lambda X)$ 
satisfy the criterion for polynomiality
that appears in Theorem~\ref{existence}, thereby allowing the theory
of section~\ref{section-general-polynomiality} to be applied.

\begin{theorem}\label{mcmullen-polynomiality}
	Let $V$ be a finite dimensional real vector space, 
	let $\varphi\colon\K(V)\to A$ be a continuous translation-invariant
	valuation with values in a Hausdorff topological abelian group $A$,
	and let $X$ be a convex body of dimension $d$ in $V$.
	Then the function $[0,\infty)\to A$, 
	$\lambda\mapsto \varphi(\lambda X)$
	vanishes under $\Delta_{b_1}\cdots\Delta_{b_{d+1}}$
	for any $b_1,\ldots,b_{d+1}\in[0,\infty)$.
\end{theorem}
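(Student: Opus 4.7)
Plan. I plan to prove Theorem~\ref{mcmullen-polynomiality} in three stages: reduction to polytopes by continuity, reduction to simplices by induction on $d=\dim X$ via iterated hyperplane cuts, and a direct argument in the simplex case via a Minkowski-sum decomposition.

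For the first reduction, observe that $\Delta_{b_1}\cdots\Delta_{b_{d+1}}\varphi(\lambda X)$ is a fixed $\bbZ$-linear combination of values $\varphi\bigl((\lambda+\sum_{i\in S}b_i)X\bigr)$, and so depends continuously on $X$ in the Hausdorff topology. Since polytopes of dimension at most $d$ are dense in the space of convex bodies of dimension at most $d$ and $A$ is Hausdorff, it suffices to prove the theorem when $X$ is a polytope. For the induction on $d$, the base $d=0$ is immediate, as translation invariance makes $\varphi(\lambda X)=\varphi(\{0\})$ constant in $\lambda$. For $d>0$ with the theorem known in dimensions $<d$ and $P$ a $d$-polytope, I pick a hyperplane $H_0$ cutting $P$ into convex pieces $P^\pm$ meeting in $P\cap H_0$ of dimension $d-1$; scaling by $\lambda$ and applying the valuation axiom gives
\[
\varphi(\lambda P)=\varphi(\lambda P^+)+\varphi(\lambda P^-)-\varphi(\lambda(P\cap H_0)),
\]
whose last term is annihilated by $d$ (hence $d+1$) iterated differences via the inductive hypothesis. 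Iterating the cut on $P^\pm$ reduces the claim for $P$ to the claim when $X=\Delta$ is a $d$-simplex.

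For the simplex case, I would decompose the Minkowski sum $(\lambda+b)\Delta=\lambda\Delta+b\Delta$ into convex pieces of the form $\lambda F+bG$ with $F,G$ faces of $\Delta$ (via a coherent mixed subdivision coming from a generic lifting of $\Delta$). The ``pure'' pieces $\lambda\Delta+b\{v\}$ and $\lambda\{v\}+b\Delta$ reduce by translation invariance to $\varphi(\lambda\Delta)$ and $\varphi(b\Delta)$ respectively; the former cancels when one passes from $\varphi((\lambda+b)\Delta)$ to $\Delta_b\varphi(\lambda\Delta)$, while the latter is constant in $\lambda$ and killed by any further difference. The remaining ``mixed'' pieces $\lambda F+bG$ involve proper faces of $\Delta$, and their $\varphi$-values as functions of $\lambda$ (with $b$ fixed) should factor through lower-dimensional behaviour.

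The main obstacle is the analysis of these mixed pieces, whose $\lambda$-dependence is coupled with their $b$-dependence via the Minkowski sum, so the single-variable inductive hypothesis does not apply directly. Addressing this cleanly requires a strengthened inductive statement: for a convex body $F$ of dimension $k$ and any convex body $Q$, $\lambda\mapsto\varphi(\lambda F+Q)$ satisfies $\Delta_{b_1}\cdots\Delta_{b_{k+1}}=0$. The key technical identity enabling this strengthening is that, for a convex polytope $P$ cut by a hyperplane $H_0$ into $P^\pm$ and any convex body $Q$, $(\lambda P^++Q)\cap(\lambda P^-+Q)=\lambda(P\cap H_0)+Q$; this is verified by parameterising the crossing of a chord $[p^-,p^+]\subset P$ through $H_0$ jointly with the corresponding chord in $Q$. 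The identity allows the cut argument of the second paragraph to be extended with the parameter $Q$, so the strengthened hypothesis is proved by a parallel induction on $k$, and applied to the mixed pieces with $Q=bG$ it annihilates their contribution to $\Delta_{b_1}\cdots\Delta_{b_{d+1}}\varphi(\lambda\Delta)$, completing the proof.
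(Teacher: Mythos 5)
Your proposal is correct and takes essentially the same route as the paper: Lemma~\ref{simplex-decomposition} is exactly the prism-type coherent mixed subdivision $(\lambda+b)\sigma^d=\bigcup_i\bigl(\lambda\sigma^i+b\tau^{d-i}+\text{translate}\bigr)$ that you describe, and the induction on $\dim X$ with a continuity reduction to polytopes is identical. Your ``strengthened inductive statement'' with the auxiliary body $Q$ is achieved more economically in the paper by applying the inductive hypothesis directly to the translation-invariant valuation $Z\mapsto\varphi(Z+Y)$ on the affine span of the proper face $\sigma^i$ --- the Minkowski-sum identity you verify is precisely what shows this auxiliary map is a valuation, so no separate parallel induction is needed.
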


Theorem~C of the introduction now follows by immediately 
by applying Theorem~\ref{existence} to the function
$\lambda\mapsto \varphi(\lambda X)$.
In later sections we will use Theorems~\ref{uniqueness} and~\ref{existence} 
in more detail to obtain Theorem~B of the introduction.

The main ingredient in the proof of Theorem~\ref{mcmullen-polynomiality}
is Lemma~\ref{simplex-decomposition} below,
which is a version of the canonical simplex decomposition
given by McMullen in~\cite[Lemma~10]{McMullenPolytope},
and elaborates on the decomposition used by Chen in~\cite[Lemma~3.4]{Chen}.

Let $V$ be a vector space, let $v_1,\ldots,v_d$ be linearly independent
vectors in $V$, and define
\[
	\sigma^d = S(v_1,\ldots,v_d)
	=
	\left\{
		x_1v_1+\cdots+x_dv_d
		\mid
		1\geqslant x_1\geqslant\cdots\geqslant x_d\geqslant 0
	\right\}.
\]
This is the $d$-simplex with vertices $0,v_1,v_1+v_2,\ldots,v_1+\cdots+v_d$.
Any $d$-simplex with $0$ as a vertex has this form for an appropriate choice
of $v_1,\ldots,v_d$.

\begin{lemma}[A simplex decomposition]
	\label{simplex-decomposition}
	Let $V$ be a vector space, let $v_1,\ldots,v_d\in V$ 
	be linearly independent, and let 
	\[
		\sigma^i = S(v_1,\ldots,v_i),
		\qquad
		\tau^{d-i} = S(v_{i+1},\ldots,v_d)
	\]
	for $i=0,\ldots,d$.
	Let $a,b>0$, and define
	\begin{align*}
		A_i &= a\sigma^{i\phantom{-1}} 
		+ b\tau^{d-i} + b(v_1+\cdots +v_i)
		\quad \text{for}\ 
		 i\geqslant 0,
		\\
		B_i &= a\sigma^{i-1} 
		+ b\tau^{d-i} + b(v_1+\cdots+ v_i)
		\quad \text{for}\ 
		i\geqslant 1.
	\end{align*}
	Then 
	\begin{equation}\label{eqn-overall}
		(a+b)\sigma^d
		=
		A_0\cup\cdots\cup A_d
	\end{equation}
	and
	\begin{equation}\label{eqn-step}
		(A_0\cup\cdots\cup A_{i-1})\cap A_i = B_i
		\quad \text{for}\ 
		i\geqslant 1
	\end{equation}
	Consequently, if $\varphi$ is a translation-invariant
	valuation on $\K(V)$, then we have
	\begin{equation}\label{eqn-final}
		\varphi((a+b)\sigma^d)
		=
		\sum_{i=0}^d \varphi(a\sigma^i + b\tau^{d-i})
		-
		\sum_{i=1}^{d}\varphi (a\sigma^{i-1}+b\tau^{d-i}).
	\end{equation}
\end{lemma}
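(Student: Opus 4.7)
The plan is to reduce all three assertions to the coordinate description of $(a+b)\sigma^d$ afforded by the basis $v_1,\ldots,v_d$. Writing any point of this simplex uniquely as $z = z_1 v_1 + \cdots + z_d v_d$ with $a+b \geq z_1 \geq \cdots \geq z_d \geq 0$, a direct unpacking of the Minkowski sums and the translation by $b(v_1 + \cdots + v_i)$ shows
\begin{align*}
A_i &= \{z : a+b \geq z_1 \geq \cdots \geq z_i \geq b \geq z_{i+1} \geq \cdots \geq z_d \geq 0\}, \\
B_i &= \{z \in A_i : z_i = b\}.
\end{align*}
With the conventions $z_0 = a+b$ and $z_{d+1} = 0$, the set $A_i$ is precisely the subset of $(a+b)\sigma^d$ cut out by the sandwich condition $z_i \geq b \geq z_{i+1}$.

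\textbf{Covering and intersection.} For (\ref{eqn-overall}), every point of $(a+b)\sigma^d$ admits a largest index $i \in \{0,\ldots,d\}$ with $z_i \geq b$; this $i$ automatically satisfies $z_{i+1} \leq b$ and hence places $z$ in $A_i$, while the reverse inclusion is immediate. For (\ref{eqn-step}), the key intermediate step is the auxiliary identity
\[
A_0 \cup \cdots \cup A_{i-1} = \{z \in (a+b)\sigma^d : z_i \leq b\}.
\]
The inclusion $\subseteq$ is immediate from the sandwich descriptions. For the reverse inclusion, when $z_i < b$ one takes the largest $j \leq i-1$ with $z_j \geq b$ (valid since $z_0 = a+b \geq b$); the edge case $z_i = b$ is handled by the observation that $z_{i-1} \geq z_i = b$, which places $z$ in $A_{i-1}$. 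Intersecting this description with the defining inequality $z_i \geq b$ of $A_i$ forces $z_i = b$, and the residual inequalities are precisely those defining $B_i$.

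\textbf{Valuation identity and main obstacle.} With the set-theoretic claims in hand, set $U_i = A_0 \cup \cdots \cup A_i$ and proceed by induction on $i$. The auxiliary identity shows $U_{i-1} = \{z \in (a+b)\sigma^d : z_i \leq b\}$, an intersection of two convex bodies, hence a member of $\K(V)$; each $A_i$ and $B_i$ is visibly convex as well. The valuation relation then applies at every stage, giving
\[
\varphi(U_i) = \varphi(U_{i-1}) + \varphi(A_i) - \varphi(B_i),
\]
and iterating from $U_0 = A_0$ up to $U_d = (a+b)\sigma^d$ produces $\varphi((a+b)\sigma^d) = \sum_{i=0}^{d} \varphi(A_i) - \sum_{i=1}^{d} \varphi(B_i)$. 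Translation invariance of $\varphi$ then strips the shift $b(v_1+\cdots+v_i)$ from each $A_i$ and $B_i$, yielding (\ref{eqn-final}). The main subtlety I anticipate is verifying convexity of each $U_i$: unions of convex polytopes are generally not convex, so without the explicit half-space description of $U_i$ afforded by the coordinate picture, the valuation relation could not be invoked iteratively.
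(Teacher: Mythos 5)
Your proof is correct and follows essentially the same route as the paper: both rest on the coordinate descriptions $A_i=\{z\in(a+b)\sigma^d : z_i\geqslant b\geqslant z_{i+1}\}$, $B_i=\{z : z_i=b\}$, and the auxiliary identity $A_0\cup\cdots\cup A_{i-1}=\{z : z_i\leqslant b\}$, from which \eqref{eqn-overall}, \eqref{eqn-step}, and the iterated inclusion–exclusion giving \eqref{eqn-final} all follow. Your explicit attention to the convexity of the partial unions $U_{i-1}$ (needed so that the valuation relation may legitimately be applied at each stage) is a point the paper leaves implicit, but it is the same argument.
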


\begin{proof}
	One can check that 
	\begin{align*}
		A_i
		&= 
		\{x_1v_1+\cdots+x_dv_d\in(a+b)\sigma
		\mid x_i\geqslant b\geqslant x_{i+1}\},
		\\
		B_i
		&= 
		\{x_1v_1+\cdots+x_dv_d\in(a+b)\sigma
		\mid x_i= b\}
	\end{align*}
	where, in the cases $i=0,d$, the inequalities involving $x_0,x_{d+1}$
	are omitted.
	Consequently
	\[
		A_0\cup\cdots\cup A_{i-1}
		= 
		\{x_1v_1+\cdots+x_dv_d\in(a+b)\sigma^d\mid b\geqslant x_i\},
	\]
	with the inequality omitted in the case $i-1=d$.	
	Equations~\eqref{eqn-overall} and~\eqref{eqn-step} now follow.
	Finally we use \eqref{eqn-overall} and~\eqref{eqn-step}
	and the fact that $\varphi$ is a valuation to show that
	$\varphi((a+b)\sigma^d)
	=
	\sum_{i=0}^d\varphi(A_i) - \sum_{i=1}^d \varphi(B_i)$.
	The definition of the $A_i$ and $B_i$, together with 
	translation-invariance of $\varphi$, now give~\eqref{eqn-final}.
\end{proof}

\begin{proof}[Proof of Theorem~\ref{mcmullen-polynomiality}]
	Let us write $f$ for the function
	$f\colon [0,\infty)\to A$, $\lambda\mapsto \varphi(\lambda X)$.
	We prove the theorem by induction on the dimension $d$.
	In the case $d=0$ the function $f$
	is constant, so that it vanishes under $\Delta_{b_1}$ as required.
	So let us assume that $d>0$ and that the result holds for all
	smaller values of $d$.
	Since $\Delta_{b_1}\cdots\Delta_{b_{d+1}} f (\lambda_0)$
	depends continuously on $X$, we may assume that $X$ is a polytope.
	And since $\varphi$ is a translation-invariant valuation,
	we may in fact assume that $X$ is a simplex with a vertex at the
	origin.
	Write $X$ as $\sigma^d = S(v_1,\ldots,v_d)$ for appropriate
	$v_1,\ldots,v_d\in V$.
	Then by Lemma~\ref{simplex-decomposition} we have:
	\begin{align*}
		\Delta_{b_{d+1}}f (a)
		&=
		f(a+b_{d+1})-f(a)
		\\
		&=
		\sum_{i=0}^d \varphi(a\sigma^i + b_{d+1}\tau^{d-i})
		-
		\sum_{i=1}^{d}\varphi (a\sigma^{i-1}+b_{d+1}\tau^{d-i})
		-\varphi(a\sigma^d)
		\\
		&=
		\sum_{i=0}^{d-1} \varphi(a\sigma^i + b_{d+1}\tau^{d-i})
		-
		\sum_{i=1}^{d}\varphi (a\sigma^{i-1}+b_{d+1}\tau^{d-i})
	\end{align*}
	Observe that every term in this expression has form
	$\varphi(a\sigma^i+Y)$ for some $i$ in the range 
	$0\leqslant i\leqslant d-1$, where $\sigma^i$ and $Y$ lie in
	subspaces of $V$ which have trivial intersection.
	Let us fix such a term $\varphi(a\sigma^i+Y)$,
	let us write $W$ for the affine subspace spanned by $\sigma^i$,
	and note that the assignment $Z\mapsto \varphi(Z+Y)$ is a continuous
	translation-invariant valuation on $\K(W)$.
	It then follows by the inductive hypothesis
	that $Z\mapsto \varphi(Z+Y)$ vanishes under 
	$\Delta_{b_1}\cdots\Delta_{b_d}$ for any $b_1,\ldots,b_d\in[0,\infty)$.
	Consequently $\Delta_{b_1}\cdots\Delta_{b_d}(\Delta_{b_{d+1}} f)$
	vanishes, as required.
\end{proof}

\section{Groups of convex bodies}
\label{section-grothendieck}

In this section we introduce the group $\CB(V,G)$ of convex bodies
in $V$, and the universal valuation with target $\CB(V,G)$,
and we give examples of how to work with these using the universal property.

\begin{definition}
	Let $V$ be a finite-dimensional real vector space, and let
	$G$ be a group of affine-linear transformations of $V$.
	The \emph{group of convex bodies in $V$},
	denoted $\CB(V,G)$, is the quotient of the 
	free Hausdorff topological group 
	$\bbZ \K(V)$ on $\K(V)$ by the closed subgroup
	generated by all elements of the form
	\[
		[A\cup B]-[A]-[B]+[A\cap B]	
	\]
	for $A,B\in\K(V)$ such that $A\cup B\in\K(V)$,
	\emph{and} all elements of the form
	\[
		[A] - [gA]
	\]
	for $A\in\K(V)$ and $g\in G$.
	The \emph{universal continuous $G$-invariant valuation on $\K(V)$},
	denoted
	\[
		\Phi\colon\K(V)\longrightarrow\CB(V,G),
	\]
	is the composite $\K(V)\to\bbZ\K(V)\to\CB(V,G)$,
	and we continue to denote it by $\Phi(X) = [X]$ for $X\in\K(V)$.
\end{definition}

\begin{proposition}[The universal property of $\Phi$]\label{universal}
	Let $V$ be a finite-dimensional real vector space, and let
	$G$ be a group of affine-linear transformations of $V$.
	Then $\Phi\colon \K(V)\to\CB(V,G)$ 
	is a continuous $G$-invariant valuation,
	and it has the following universal property:
	If $A$ is a Hausdorff topological abelian group,
	then there is a bijection
	between the set of continuous homomorphisms 
	$\CB(V,G)\to A$ and the set of continuous
	$G$-invariant valuations $\K(V)\to A$.
	This bijection sends a homomorphism $\CB(V,G)\to A$ to 
	its composite with $\Phi$.
\end{proposition}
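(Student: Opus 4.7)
The approach is a direct verification from the construction of $\CB(V,G)$ as a quotient of $\bbZ\K(V)$, making use of the universal property of free HTAGs recalled in Definition~\ref{free-HTAG} together with the fact noted in section~\ref{section-TAG} that quotients of HTAGs by closed subgroups are again HTAGs.

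\emph{Step 1: $\Phi$ is a continuous $G$-invariant valuation.} Continuity is immediate, since $\Phi$ is the composite of the continuous map $\K(V)\to \bbZ\K(V)$ (from the definition of the free HTAG) with the continuous quotient map $q\colon \bbZ\K(V)\to \CB(V,G)$. The valuation property $\Phi(A\cup B)-\Phi(A)-\Phi(B)+\Phi(A\cap B)=0$ and $G$-invariance $\Phi(gA)=\Phi(A)$ hold because the corresponding elements $[A\cup B]-[A]-[B]+[A\cap B]$ and $[A]-[gA]$ of $\bbZ\K(V)$ lie, by construction, in the closed subgroup $N$ that is quotiented out.

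\emph{Step 2: The assignment $\bar\varphi\mapsto \bar\varphi\circ\Phi$ is well-defined and injective.} Well-definedness: for any continuous homomorphism $\bar\varphi\colon\CB(V,G)\to A$, the composite $\bar\varphi\circ\Phi$ is continuous, and it inherits the valuation and $G$-invariance properties from $\Phi$. Injectivity: if $\bar\varphi_1\circ\Phi=\bar\varphi_2\circ\Phi$, then $\bar\varphi_1\circ q$ and $\bar\varphi_2\circ q$ are two continuous homomorphisms $\bbZ\K(V)\to A$ that agree on the image of $\K(V)\to \bbZ\K(V)$; by the universal property in Definition~\ref{free-HTAG} they are equal, and surjectivity of $q$ then gives $\bar\varphi_1=\bar\varphi_2$.

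\emph{Step 3: Surjectivity.} Given a continuous $G$-invariant valuation $\varphi\colon\K(V)\to A$, the universal property of $\bbZ\K(V)$ produces a unique continuous homomorphism $\tilde\varphi\colon\bbZ\K(V)\to A$ with $\tilde\varphi([X])=\varphi(X)$. Because $\varphi$ is a valuation and is $G$-invariant, $\tilde\varphi$ sends each of the generators $[A\cup B]-[A]-[B]+[A\cap B]$ and $[A]-[gA]$ to $0$. Since $A$ is Hausdorff, $\{0\}\subseteq A$ is closed, so $\ker\tilde\varphi$ is a closed subgroup of $\bbZ\K(V)$; it therefore contains the closed subgroup $N$ generated by those elements. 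Consequently $\tilde\varphi$ factors uniquely through $\bbZ\K(V)/N=\CB(V,G)$ as a group homomorphism $\bar\varphi$, and continuity of $\bar\varphi$ is the universal property of the quotient topology.

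\emph{Main obstacle.} The argument is essentially formal, being a concatenation of three universal properties (free HTAG, quotient topology, and Hausdorffness of the target). The only non-trivial point to flag is the upgrade in Step~3 from vanishing of $\tilde\varphi$ on the generators to vanishing on the entire closed subgroup $N$; this relies crucially on $A$ being Hausdorff so that $\ker\tilde\varphi$ is closed, which is also why the closed subgroup (rather than just the algebraic subgroup) is the right thing to quotient by in the definition of $\CB(V,G)$.
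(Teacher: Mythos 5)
Your proof is correct and follows essentially the same route as the paper's: identify continuous homomorphisms out of the quotient $\bbZ\K(V)/\bar C$ with continuous homomorphisms out of $\bbZ\K(V)$ that kill the generators (using Hausdorffness of $A$ to pass from the algebraic subgroup $C$ to its closure), and then invoke the universal property of the free HTAG. The paper states this more tersely, but the content — and in particular the subtle point you correctly flag, that $\ker\tilde\varphi$ is closed because $A$ is Hausdorff — is the same.
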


\begin{proof}
	Let $C\subset \bbZ\K(V)$ denote the subgroup generated by the
	elements listed in the definition of $\CB(V,G)$,
	so that $\CB(V,G) = \bbZ\K(V) / \bar C$.
	Continuous homomorphisms $\varphi\colon\CB(V,G)\to A$ are the same
	thing as continuous homomorphisms $\bbZ\K(V)\to A$ that send
	the generators of $C$ to $0$,
	and by the universal property of $\bbZ\K(V)$
	(Definition~\ref{free-HTAG}) these are the same as
	continuous maps $\K(V)\to A$ that satisfy the valuation and
	$G$-invariance properties.
\end{proof}

The injectivity part of the bijection in Proposition~\ref{universal}
can be phrased simply as follows.
(Most of our applications of the universal property will be of this
part.)

\begin{corollary}\label{injective}
	Let $f,g\colon\CB(V,G)\to A$ be two continuous homomorphisms
	into the same Hausdorff topological abelian group $A$.
	If $f([X])=g([X])$ for all $X\in\K(V)$, then $f=g$.
\end{corollary}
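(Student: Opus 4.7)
The plan is to derive this directly from the bijection established in Proposition~\ref{universal}, where the only real ``work'' is in repackaging what has already been proved. Two essentially equivalent routes are available; I would present the one that uses subtraction, since it makes the role of the universal property most transparent.

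First I would consider the difference $h = f - g \colon \CB(V,G) \to A$. Since $A$ is a topological abelian group, pointwise subtraction of two continuous homomorphisms is again a continuous homomorphism, so $h$ is a continuous homomorphism. The hypothesis $f([X]) = g([X])$ for every $X \in \K(V)$ translates into $h \circ \Phi = 0$ as a map $\K(V) \to A$, namely the constant valuation with value $0 \in A$. I would then invoke Proposition~\ref{universal}: the zero homomorphism $\CB(V,G) \to A$ clearly pulls back under $\Phi$ to the zero valuation, and by the bijectivity of the correspondence between continuous homomorphisms $\CB(V,G) \to A$ and continuous $G$-invariant valuations $\K(V) \to A$, there can be only one homomorphism with this property. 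Therefore $h = 0$, which gives $f = g$.

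Alternatively, one may apply the bijection directly to $f$ and $g$: the continuous $G$-invariant valuations $f \circ \Phi$ and $g \circ \Phi$ agree by hypothesis, so by injectivity of the correspondence they come from the same continuous homomorphism, namely $f = g$.

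There is no real obstacle here; the statement is a clean restatement of the injectivity half of Proposition~\ref{universal}. The only point worth checking carefully is that $f - g$ is continuous, which is immediate from the continuity of addition and negation in $A$, so that the corollary is genuinely just a reformulation for later convenience.
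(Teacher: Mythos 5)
Your proposal is correct and matches the paper's intent exactly: the paper offers no separate proof, presenting the corollary as a direct rephrasing of the injectivity half of Proposition~\ref{universal}, which is precisely your second (and essentially also your first) argument. Nothing further is needed.
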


Let us use the universal property to construct a family of maps which
will be important throughout the rest of the paper.
We cover this in detail to try to emphasise how the machinery works.

\begin{example}[Dilation maps]\label{dilation-example}
	Let $V$ be a finite-dimensional real vector space.

	To begin with let us take $\lambda\in[0,\infty)$.
	Then dilation by $\lambda$ determines a map
	$\K(V)\to \K(V)$, $X\mapsto\lambda X$.  
	This map is continuous, $G$-invariant,
	and preserves unions and intersections.
	Composing with $\Phi$, we therefore obtain a 
	continuous $G$-invariant valuation
	$\K(V)\to\CB(V,G)$ defined by $X\mapsto[\lambda X]$.
	Finally, by the universal property of $\Phi$,
	this corresponds to a continuous homomorphism
	$\CB(V,G)\to\CB(V,G)$ for which $[X]\mapsto[\lambda X]$.

	As $\lambda$ varies, the maps just constructed assemble into 
	a \emph{dilation map}
	\[
		D\colon[0,\infty)\times\CB(V,G)\to\CB(V,G)
	\]
	for which
	$(\lambda,[X])\mapsto[\lambda X]$.
	We want this map to be continuous; this is not immediate from
	the previous paragraph, but we can prove it using
	currying.
	The map $[0,\infty)\times\K(V)\to\K(V)$, $(\lambda,X)\mapsto\lambda X$
	is continuous and, by composing with 
	$\Phi$, we obtain a continuous map 
	$[0,\infty)\times\K(V)\to\CB(V,G)$, $(\lambda,X)\mapsto [\lambda X]$.
	By currying we obtain a continuous map 
	$\K(V)\to\C([0,\infty),\CB(V,G))$, 
	$X\mapsto(\lambda\mapsto[\lambda X])$
	that is a $G$-invariant valuation, as in the last paragraph.
	By the universal property of $\Phi$, this determines
	a continuous homomorphism $\CB(V,G)\to\C([0,\infty),\CB(V,G))$
	for which $[X]\mapsto(\lambda\mapsto[\lambda X])$.
	Finally, by uncurrying we obtain the required continuous map
	$D\colon [0,\infty)\times\CB(V,G)\to\CB(V,G)$.

	Let us establish two more properties of $D$.
	First,
	\[
		D(1,x) = x
		\quad\text{for}\quad x\in\CB(V,G).
	\]
	Indeed, $D(1,-)$ sends $[X]$ to $[1X] = [X]$ for any $X\in\K(V)$, 
	and the identity map has the same property,
	and the two must coincide.
	Second,
	\[
		D(\lambda,D(\mu,x))=D(\lambda\mu,x)
		\quad\text{for}\quad
		\lambda,\mu\in[0,\infty),\ x\in\CB(V,G).
	\]
	Indeed, the dilation map $[0,\infty)\times\K(V)\to\K(V)$
	satisfies $\lambda(\mu X) = (\lambda\mu)X$, and so the two maps
	$D(\lambda,D(\mu,-)),D(\lambda\mu,-)$
	send $[X]$ to $[\lambda(\mu X)] = [(\lambda\mu)X]$
	for any $X\in\K(V)$, and they therefore coincide.
\end{example}

\section{The structure of the convex body groups}\label{section-structure}

In this section we use polynomiality to investigate the structure
of the groups $\CB(V,G)$, and we give the proof of Theorems~A and~B.

Let $V$ be a finite dimensional real vector space and let $G$ be
a group of affine-linear transformations of $V$.
Recall the dilation map
\[
	D\colon [0,\infty)\times\CB(V,G)\to\CB(V,G)
\]
from Example~\ref{dilation-example}, which is characterised by the fact that
$D(\lambda,[X]) = [\lambda X]$.

\begin{proposition}\label{criterion}
	Let $V$ be a real vector space of dimension $d$,
	and let $G$ be
	a group of affine-linear transformations of $V$
	that includes all translations.
	Then the dilation map
	\[
	D\colon [0,\infty)\times\CB(V,G)\to\CB(V,G)
	\]
	of Example~\ref{dilation-example}
	satisfies
	\begin{equation}\label{vanishing}
		\Delta_{u_1}\cdots\Delta_{u_{d+1}} D(-,x)=0
	\end{equation}
	for any $x\in\CB(V,G)$ and any $u_1,\ldots,u_{d+1}\in[0,\infty)$.
\end{proposition}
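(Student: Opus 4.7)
The plan is to reduce the claim, via the universal property of $\Phi$, to the case $x=[X]$ for a single convex body $X\in\K(V)$, and then to invoke Theorem~\ref{mcmullen-polynomiality}. To set this up I fix $u_1,\ldots,u_{d+1},\lambda\in[0,\infty)$ and define $T\colon\CB(V,G)\to\CB(V,G)$ by
\[
    T(x) \;=\; \bigl(\Delta_{u_1}\cdots\Delta_{u_{d+1}}D(-,x)\bigr)(\lambda).
\]
Expanding the iterated difference as an alternating sum, indexed by subsets $S\subseteq\{1,\ldots,d+1\}$, of the values $D(\lambda+\sum_{i\in S}u_i,\, x)$, and recalling from Example~\ref{dilation-example} that each $D(\mu,-)$ is a continuous endomorphism of $\CB(V,G)$, I see that $T$ is itself a continuous homomorphism, being a finite $\bbZ$-linear combination of such maps. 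By Corollary~\ref{injective} it therefore suffices to show that $T([X])=0$ for every $X\in\K(V)$.

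For this second step, the hypothesis that $G$ contains every translation guarantees that $\Phi$ is a continuous translation-invariant valuation on $\K(V)$. Applying Theorem~\ref{mcmullen-polynomiality} to $\Phi$ and to an arbitrary $X\in\K(V)$ of dimension $d_0\leqslant d$, I learn that the function $\mu\mapsto\Phi(\mu X)=[\mu X]$ is killed by $\Delta_{b_1}\cdots\Delta_{b_{d_0+1}}$ for any $b_1,\ldots,b_{d_0+1}\in[0,\infty)$. Since $d_0+1\leqslant d+1$, and since applying further difference operators to the zero function still yields zero, this function is in particular killed by $\Delta_{u_1}\cdots\Delta_{u_{d+1}}$; evaluating at $\lambda$ gives $T([X])=0$, which completes the reduction and hence the proof.

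I do not foresee a serious obstacle: all the substantive work has been done in the earlier pieces, namely Theorem~\ref{mcmullen-polynomiality}, the construction of $D$ in Example~\ref{dilation-example}, and the universal-property tool Corollary~\ref{injective}. The only point that warrants a little care is confirming that $T$ is a continuous homomorphism in $x$, but this is immediate once each $D(\mu,-)$ is known to be one, since only finitely many of these maps are combined in forming $T$.
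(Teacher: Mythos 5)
Your proof is correct and follows essentially the same route as the paper: fix $\lambda$ and the $u_i$, observe that $x\mapsto\Delta_{u_1}\cdots\Delta_{u_{d+1}}D(-,x)(\lambda)$ is a continuous homomorphism (a finite $\bbZ$-linear combination of the continuous endomorphisms $D(\mu,-)$), reduce to $x=[X]$ via Corollary~\ref{injective}, and conclude from Theorem~\ref{mcmullen-polynomiality}. The only difference is presentational order; the paper additionally relies (implicitly) on commutativity of the $\Delta_u$ when passing from $d_0+1$ to $d+1$ difference operators, a point you spell out more explicitly.
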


\begin{proof}
	Applying Theorem~\ref{mcmullen-polynomiality} to the universal
	valuation $\Phi$ shows that, for any $X\in\K(V)$,
	the map $\lambda\mapsto \Phi(\lambda X) = [\lambda X]$
	vanishes under 
	$\Delta_{u_1}\cdots\Delta_{u_{m+1}}$
	for any $u_1,\ldots,u_{m+1}\in[0,\infty)$,
	where $m$ is the dimension of $X$.
	It then follows that $\lambda\mapsto [\lambda X]$
	vanishes under 
	$\Delta_{u_1}\cdots\Delta_{u_{d+1}}$
	for any $u_1,\ldots,u_{d+1}\in[0,\infty)$.
	Since $[\lambda X]=D(\lambda,[X])$, it follows that~\eqref{vanishing}
	holds in the case $x=[X]$.

	Now fix $\lambda\in[0,\infty)$ and consider the map
	$\CB(V,G)\to\CB(V,G)$, that sends $x\in\CB(V,G)$ to 
	$\Delta_{u_1}\cdots\Delta_{u_{d+1}} D(-,x)(\lambda)$.
	This is a continuous homomorphism, and by the last paragraph
	it vanishes whenever $x=[X]$ for $X\in\K(V)$,
	so by Corollary~\ref{injective} it vanishes identically, as required.
\end{proof}

The last proposition allows us to 
apply the existence of polynomial expansions (Theorem~\ref{existence})
to $D$ to obtain the following definition.

\begin{definition}[The dilation components]\label{dilation-components}
	Let $V$ be a real vector space of dimension $d$,
	and let $G$ be
	a group of affine-linear transformations of $V$
	that includes all translations.
	We define the \emph{dilation components}
	\[
		\rho_0\colon\CB(V,G)\longrightarrow\CB(V,G)
		\quad\text{and}\quad
		\rho_1,\ldots,\rho_d\colon
		[0,\infty)\times \CB(V,G)\longrightarrow\CB(V,G)
	\]
	to be the components of the polynomial expansion of
	the dilation map $D$.
	Thus the $\rho_i$ are uniquely determined by the fact that
	\begin{equation}\label{homogeneous-expansion}
		D(\lambda,x)
		=
		\rho_0(x)
		+
		\rho_1(\lambda^1,x)
		+\cdots+
		\rho_d(\lambda^d,x)
	\end{equation}
	for all $\lambda\in [0,\infty)$ and $x\in\CB(V,G)$,
	and by the fact that $\rho_i(-,x)$ is a homomorphism for each
	$i=1,\ldots,d$ and $x\in\CB(V,G)$.
\end{definition}

In what follows we will abuse notation by writing 
$\rho_i(\lambda^i,x)$ for $i=0,\ldots,d$,
including the case $i=0$,
thus ignoring the fact that the domain of $\rho_0$ is $\CB(V,G)$ rather
than $[0,\infty)\times\CB(V,G)$.
We hope that this will not cause the reader too much consternation;
it will certainly streamline our proofs in many places.

\begin{proposition}\label{rhoihomomorphism}
	In the situation of Proposition~\ref{criterion},
	the map $\rho_0$ is a homomorphism, and the maps
	$\rho_1,\ldots,\rho_d$ are homomorphisms in their second variable.
\end{proposition}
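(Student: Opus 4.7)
The plan is to use the uniqueness of polynomial expansions (Proposition~\ref{uniqueness}) to transfer the additivity of $D(\lambda,-)$ in its second variable to each of the components $\rho_i$. The crucial input, already established during the construction of $D$ in Example~\ref{dilation-example}, is that for any fixed $\lambda\in[0,\infty)$ the map $D(\lambda,-)\colon\CB(V,G)\to\CB(V,G)$ is a continuous homomorphism, so that $D(\lambda,x+y) = D(\lambda,x) + D(\lambda,y)$ for all $x, y\in\CB(V,G)$ and all $\lambda\geqslant 0$.

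First I would fix $x, y \in \CB(V,G)$ and substitute the polynomial expansion~\eqref{homogeneous-expansion} for each of $D(\lambda,x+y)$, $D(\lambda,x)$ and $D(\lambda,y)$ into the identity $D(\lambda,x+y) - D(\lambda,x) - D(\lambda,y) = 0$. Collecting terms of like degree, this rewrites the zero function as
\[ 0 = g_0 + g_1(\lambda) + g_2(\lambda^2) + \cdots + g_d(\lambda^d), \]
where $g_0 = \rho_0(x+y)-\rho_0(x)-\rho_0(y) \in \CB(V,G)$ is a constant, and for each $i\geqslant 1$ the map $g_i\colon[0,\infty)\to\CB(V,G)$ defined by $g_i(\lambda) = \rho_i(\lambda, x+y) - \rho_i(\lambda, x) - \rho_i(\lambda, y)$ is a continuous homomorphism, being the difference of three such (by Definition~\ref{dilation-components}). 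Thus the right hand side is a genuine polynomial expansion of the zero function in the sense of Definition~\ref{polynomial}.

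Next, since the trivial expansion with all components equal to zero is also a polynomial expansion of the zero function, I would invoke Proposition~\ref{uniqueness} to conclude that $g_i = 0$ for every $i=0,1,\ldots,d$. Reading off what this says yields precisely the two assertions of the proposition: $\rho_0(x+y) = \rho_0(x)+\rho_0(y)$, and $\rho_i(\lambda, x+y) = \rho_i(\lambda, x) + \rho_i(\lambda, y)$ for all $\lambda\in[0,\infty)$ and $i=1,\ldots,d$. I do not expect a genuine obstacle here: the substantive work was carried out in establishing the polynomial expansion of $D$ (Proposition~\ref{criterion} together with Definition~\ref{dilation-components}), and the present proposition reduces to a direct application of the structural uniqueness result, with the only real task being to identify the $g_i$ correctly as the components of an expansion of $0$.
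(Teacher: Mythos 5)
Your proof is correct and takes essentially the same route as the paper: both arguments substitute the polynomial expansion of $D$ into the additivity relation $D(\lambda,x+y)=D(\lambda,x)+D(\lambda,y)$ and then invoke uniqueness of polynomial expansions (Proposition~\ref{uniqueness}). The only cosmetic difference is that you compare an expansion of the zero function to the trivial expansion, whereas the paper compares two expansions of the function $\lambda\mapsto D(\lambda,x+y)$; these are the same argument.
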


\begin{proof}
	The dilation map $D\colon[0,\infty)\times\CB(V,G)
	\to \CB(V,G)$ is a homomorphism in its second variable.
	Fixing $x,y\in\CB(V,G)$, we have 
	$D(\lambda,x+y) = D(\lambda,x) + D(\lambda,y)$
	for all $\lambda\in[0,\infty)$.
	Applying~\eqref{homogeneous-expansion} to each term of this
	equation and rearranging gives us
	\[
		\sum_{i=0}^d \rho_i(\lambda^i,x+y)
		=
		\sum_{i=1}^d \left(
			\rho_i(\lambda^i,x)+\rho_i(\lambda^i,y)
		\right)
	\]
	so that we have two polynomial expansions of the same function.
	(Observe that $\rho_i(-,x)$ and $\rho_i(-,y)$ are homomorphisms,
	so that the same is true of $\rho_i(-,x)+\rho_i(-,y)$.)
	Applying uniqueness (Proposition~\ref{uniqueness}) 
	gives us the result.
\end{proof}

\begin{proposition}\label{pre-idempotence}
	In the situation of Proposition~\ref{criterion},
	the dilation components satisfy the relations
	\[
		\rho_i(\lambda,\rho_i(\mu,x))
		=
		\rho_i(\lambda\mu,x)
		\qquad\text{and}\qquad
		\rho_i(\lambda,\rho_j(\mu,x))
		=
		0
		\ \text{for\ }i\neq j
	\]
	for all $\lambda,\mu\in[0,\infty)$ and all $x\in\CB(V,G)$.
\end{proposition}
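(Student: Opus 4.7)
The plan is to derive both identities from the semigroup law $D(\lambda, D(\mu, x)) = D(\lambda\mu, x)$ established in Example~\ref{dilation-example}, by expanding both sides via~\eqref{homogeneous-expansion} and applying the uniqueness of polynomial expansions (Proposition~\ref{uniqueness}) twice --- first in $\lambda$, then in $\mu$.

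First I would expand both sides. Using that $D(\lambda, -)$ is a homomorphism on $\CB(V,G)$ (Example~\ref{dilation-example}), the left hand side becomes
\[
D(\lambda, D(\mu, x)) = \sum_{j=0}^d D(\lambda, \rho_j(\mu^j, x)) = \sum_{i=0}^d \sum_{j=0}^d \rho_i(\lambda^i, \rho_j(\mu^j, x)),
\]
while the right hand side is $\sum_{i=0}^d \rho_i(\lambda^i \mu^i, x)$. Fixing $\mu$ and $x$ and viewing each side as a function of $\lambda$, for each $i \geq 1$ the sum $\sum_j \rho_i(-, \rho_j(\mu^j, x))$ is a homomorphism $[0,\infty) \to \CB(V,G)$ by Definition~\ref{dilation-components}, so together with the constant $\sum_j \rho_0(\rho_j(\mu^j, x))$ these sums form a polynomial expansion of the left hand side; the right hand side is itself a polynomial expansion, with degree-$i$ component the homomorphism $t \mapsto \rho_i(t\mu^i, x)$. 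Proposition~\ref{uniqueness} then forces
\[
\sum_{j=0}^d \rho_i(t, \rho_j(\mu^j, x)) = \rho_i(t\mu^i, x) \qquad (i \geq 1),
\]
together with the $i = 0$ analogue $\sum_{j=0}^d \rho_0(\rho_j(\mu^j, x)) = \rho_0(x)$ from matching constant terms.

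Next I would fix $t$, $x$, and $i \geq 1$, and view the displayed identity as a function of $\mu$. The key observation here is that for each $j \geq 1$ the map $\mu^j \mapsto \rho_i(t, \rho_j(\mu^j, x))$ is a homomorphism: $\rho_j(-, x)$ is a homomorphism by Definition~\ref{dilation-components} and $\rho_i(t, -)$ is a homomorphism by Proposition~\ref{rhoihomomorphism}, so their composition is too. Thus the left hand side is a polynomial expansion in $\mu$, while the right hand side has only its degree-$i$ component nonzero. Matching degrees via Proposition~\ref{uniqueness} yields $\rho_i(t, \rho_i(\mu^i, x)) = \rho_i(t\mu^i, x)$ for $j = i$ and $\rho_i(t, \rho_j(\mu^j, x)) = 0$ for $j \neq i$; substituting $s = \mu^i$ or $s = \mu^j$ (each ranging over all of $[0,\infty)$ since $i, j \geq 1$) delivers the claimed identities.

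The main difficulty, more a bookkeeping chore than a conceptual obstacle, is handling the cases involving $\rho_0$, where the abused notation in the statement conceals that $\rho_0$ takes only one argument. These fall out of the same uniqueness argument applied to the constant-term equation above: matching degrees in $\mu$ yields $\rho_0(\rho_0(x)) = \rho_0(x)$ and $\rho_0(\rho_j(\mu^j, x)) = 0$ for $j \geq 1$, which are precisely the $i = 0$ cases of the first and second stated relations respectively.
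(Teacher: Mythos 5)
Your proposal is correct and follows essentially the same route as the paper's proof: expand both sides of $D(\lambda,D(\mu,x))=D(\lambda\mu,x)$ via \eqref{homogeneous-expansion}, then apply uniqueness of polynomial expansions (Proposition~\ref{uniqueness}) first in $\lambda$ and then in $\mu$. You spell out the verification that each side is genuinely a polynomial expansion and the bookkeeping around $\rho_0$ more explicitly than the paper does, but the argument is the same.
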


\begin{proof}
	The dilation map satisfies $D(\lambda,D(\mu,x))=D(\lambda\mu,x)$ 
	for all $\lambda,\mu\in[0,\infty)$ and all $x\in\CB(V,G)$ so  that
	by applying~\eqref{homogeneous-expansion} three times we obtain
	the identity:
	\[
		\sum_{i,j=0}^d\rho_i(\lambda^i,\rho_j(\mu^j,x))
		=
		\sum_{k=0}^d\rho_k(\lambda^k\mu^k,x)
	\]
	Fixing $\mu$ and $x$ this identity relates two polynomial expansions
	in the variable $\lambda$, so that by uniqueness of polynomial
	expansions (Proposition~\ref{uniqueness}) we have
	\[
		\sum_{j=0}^d\rho_i(\lambda^i,\rho_j(\mu^j,x))
		=
		\rho_i(\lambda^i\mu^i,x)
	\]
	for each $i$.
	Fixing $\lambda$ and $x$, this identity relates two polynomial
	expansions in the variable $\mu$, and so by applying uniqueness
	once more we obtain the result.
\end{proof}

\begin{definition}[The McMullen idempotents]
\label{idempotents-splitting}
	In the situation of Proposition~\ref{criterion},
	we define the \emph{McMullen idempotents} to be the homomorphisms
	\[
		e_0,\ldots,e_d\colon\CB(V,G)\longrightarrow\CB(V,G)
	\]
	defined by $e_0=\rho_0$ and 
	$e_i(x)=\rho_i(1,x)$ for $i=1,\ldots,d$ and $x\in\CB(V,G)$.
	By equation~\eqref{homogeneous-expansion} we have
	\[
		e_0+\cdots+e_d = 1
	\]
	since $D(1,-)$ is the identity map.
	And by Proposition~\ref{pre-idempotence} we have
	\[
		e_i^2=e_i,
		\qquad
		e_ie_j=0
	\]
	for $i,j=0,\ldots,d$ and $i\neq j$.
	Thus the McMullen idempotents 
	$e_i$ form a complete set of orthogonal idempotents.
	They therefore express $\CB(V,G)$ as a direct sum of closed subgroups
	\[
		\CB(V,G)
		=
		\CB_0(V,G)\oplus\cdots\oplus\CB_d(V,G)
	\]
	where $\CB_i(V,G) = e_i(\CB(V,G))=\ker(1-e_i)$ for each $i$.
	We refer to this as the \emph{McMullen decomposition}
	of $\CB(V,G)$.
	With respect to this decomposition we write the universal valuation
	$\Phi\colon\K(V)\to\CB(V,G)$ as 
	\[
		\Phi = \Phi_0\oplus\cdots\oplus\Phi_d.
	\]
\end{definition}

Note that if $X$ is a convex body in $V$, then $[X]\in\CB(V,G)$ will not 
necessarily lie in a single group $\CB_i(V,G)$.
Rather, it will typically have nonzero components 
in $\CB_i(V,G)$ for each $i$ up to and including
the dimension of $X$.

We now come to the main theorem of the section.

\begin{theorem}\label{theorem-a-refined}
	\begin{enumerate}
		\item
		$\CB_0(V,G)$ is a copy of $\bbZ$ generated by 
		the class of any
		one-point convex body.
		\item
		For $i>0$, each $\CB_i(V,G)$
		admits the structure of a 
		Hausdorff topological vector space over $\bbR$, extending its 
		structure as a topological abelian group.
		With respect to this vector space structure, the valuation
		$\Phi_i\colon\K(V)\to\CB_i(V,G)$ is homogeneous of degree $i$.
	\end{enumerate}
\end{theorem}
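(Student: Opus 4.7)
The plan is to read both parts of the theorem off the structure already assembled --- the dilation components $\rho_i$, their relations via the McMullen idempotents, and the universal property. For part (1), first observe that setting $\lambda = 0$ in the polynomial expansion~\eqref{homogeneous-expansion} kills every $\rho_i$ with $i \geq 1$, because each such $\rho_i$ is a semigroup homomorphism in its first argument and so vanishes at $0$. Hence $e_0([X]) = \rho_0([X]) = D(0,[X]) = [0\cdot X] = [\{0\}]$ for every $X \in \K(V)$, and the translation-invariance built into $G$ makes $[\{v\}] = [\{0\}]$ for all $v \in V$. The Euler characteristic $\chi(X) = 1$ is a continuous $G$-invariant valuation, and so factors through a continuous homomorphism $\epsilon\colon \CB(V,G) \to \bbZ$. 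The composite $\psi(x) := \epsilon(x)\cdot[\{0\}]$ is a continuous homomorphism $\CB(V,G) \to \CB(V,G)$ that agrees with $e_0$ on every $[X]$, so by Corollary~\ref{injective} we have $e_0 = \psi$ and therefore $\CB_0(V,G) = \mathrm{image}(e_0) = \bbZ[\{0\}]$. The map $\bbZ \to \CB_0(V,G)$, $n \mapsto n[\{0\}]$, is continuous (since $\bbZ$ is discrete) and its inverse is the restriction of $\epsilon$ (continuous, with $\epsilon([\{0\}]) = 1$), so this is a topological group isomorphism $\CB_0(V,G) \cong \bbZ$.

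For part (2), fix $i \geq 1$ and define scalar multiplication on $\CB_i(V,G)$ by
\[
	t \cdot x := \rho_i(t, x), \qquad t \in [0,\infty),\ x \in \CB_i(V,G).
\]
This lies in $\CB_i(V,G)$ by Proposition~\ref{pre-idempotence}, and the semiring action axioms over $[0,\infty)$ follow directly from Propositions~\ref{rhoihomomorphism} and~\ref{pre-idempotence}: unitality $1 \cdot x = e_i(x) = x$; associativity $(st) \cdot x = s \cdot (t \cdot x)$; scalar distributivity $(s+t) \cdot x = s \cdot x + t \cdot x$ because $\rho_i(-,x)$ is a semigroup homomorphism; and vector distributivity $t \cdot (x + y) = t \cdot x + t \cdot y$ because $\rho_i(t,-)$ is a homomorphism. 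Extend the action to $\bbR$ by the standard recipe, declaring $(-t) \cdot x := -(t \cdot x)$ for $t \geq 0$; this extension is uniquely determined and satisfies the ring axioms because $\bbR$ is the group completion of the additive semigroup $[0,\infty)$. Continuity of the $\bbR$-action follows by the pasting lemma from continuity of $\rho_i$ on $[0,\infty) \times \CB_i(V,G)$ and continuity of $(t,x) \mapsto -\rho_i(-t,x)$ on $(-\infty,0] \times \CB_i(V,G)$, which agree at $t = 0$. Hausdorffness is inherited from $\CB(V,G)$.

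Homogeneity of $\Phi_i$ is then immediate: applying the homomorphism $e_i$ to the polynomial expansion of $D(\lambda,[X])$ and using Proposition~\ref{pre-idempotence} annihilates every term except $k = i$, giving
\[
	\Phi_i(\lambda X) = e_i([\lambda X]) = \rho_i(\lambda^i,[X]) = \lambda^i \cdot \rho_i(1,[X]) = \lambda^i\,\Phi_i(X).
\]
The only real obstacle is conceptual rather than technical: recognising that the correct scalar action is $t \cdot x = \rho_i(t,x)$, not anything involving $t^i$. Scalar distributivity would fail for any nonlinear substitution because $\rho_i(-,x)$ is only a semigroup homomorphism; the exponent $i$ reappears in the homogeneity formula only after substituting $t = \lambda^i$ to match the expansion of $D$ in powers of $\lambda$.
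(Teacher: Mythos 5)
Your proposal is correct and follows essentially the same route as the paper: part (1) via the constant valuation $1$ and the identity $e_0([X])=[\{0\}]$ obtained by setting $\lambda=0$ in the polynomial expansion, combined with Corollary~\ref{injective}; part (2) via the scalar action $t\cdot x=\rho_i(t,x)$ on $[0,\infty)$, extended to $\bbR$ by negation and glued for continuity, with homogeneity read off from Proposition~\ref{pre-idempotence}. The closing remark correctly identifies the one genuinely non-obvious choice (the action is $\rho_i(t,x)$, not $\rho_i(t^i,x)$), which is exactly the point the paper's proof turns on.
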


\begin{lemma}\label{one-point}
	Let $p=\{0\}$ denote the one-point convex body consisting of the
	origin alone.
	Then $[p]\in\CB_0(V,G)$ and 
	\[
		\Phi_0(X)=e_0([X]) = [p]
	\]
	for any $X\in\K(V)$.
\end{lemma}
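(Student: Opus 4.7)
The plan is to evaluate the polynomial expansion of the dilation map at $\lambda = 0$. First, I observe that for any $x \in \CB(V,G)$, the identity
\[
    D(\lambda, x) = \rho_0(x) + \rho_1(\lambda^1, x) + \cdots + \rho_d(\lambda^d, x)
\]
holds for all $\lambda \in [0,\infty)$. Setting $\lambda = 0$ and using that each $\rho_i(-, x)$ is a homomorphism for $i \geqslant 1$ (so that $\rho_i(0, x) = 0$), this collapses to $D(0, x) = \rho_0(x) = e_0(x)$. In particular, $e_0$ coincides with the homomorphism $D(0, -)$.

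Next, I compute $D(0, [X])$ directly for $X \in \K(V)$. By construction of the dilation map in Example~\ref{dilation-example}, we have $D(0, [X]) = [0 \cdot X] = [\{0\}] = [p]$, using that $0 \cdot X = \{0\}$ for every convex body $X$. Combining this with the previous paragraph yields $e_0([X]) = [p]$ for all $X \in \K(V)$, which is the second equality in the statement.

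For the first assertion, I apply the identity $e_0([X]) = [p]$ with $X = p$ to conclude $e_0([p]) = [p]$, so $[p]$ lies in $\CB_0(V,G) = \ker(1 - e_0)$. Finally, by the definition of $\Phi_0$ from the McMullen decomposition, $\Phi_0(X) = e_0([X])$, and the chain of equalities $\Phi_0(X) = e_0([X]) = [p]$ is established.

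I do not anticipate a serious obstacle here: the whole argument rests on the purely formal observation that evaluating the polynomial expansion at $\lambda = 0$ kills every homogeneous component except the constant one, combined with the trivial geometric fact $0 \cdot X = \{0\}$. The only point requiring mild care is justifying that $\rho_i(0, x) = 0$ for $i \geqslant 1$, which is immediate from Proposition~\ref{rhoihomomorphism} (or from the definition of the dilation components) since $\rho_i(-, x)$ is a homomorphism of abelian semigroups from $[0,\infty)$ and hence sends $0$ to $0$.
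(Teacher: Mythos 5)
Your proof is correct and follows essentially the same route as the paper: setting $\lambda=0$ in the polynomial expansion~\eqref{homogeneous-expansion} and using $0\cdot X=\{0\}$. You simply spell out the details (including why $\rho_i(0,x)=0$ and why $[p]\in\ker(1-e_0)$) that the paper's one-line proof leaves implicit.
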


\begin{proof}
	$\Phi_0(X)=e_0([X])$ by the definition of $\Phi_0$.
	And to prove that $e_0([X])=[p]$
	we set $\lambda=0$ in~\eqref{homogeneous-expansion}
\end{proof}

\begin{lemma}\label{rhoi-ei}
	The maps $e_i$ and $\rho_i$ are compatible in the following sense:
	\[
		e_i(\rho_j(\lambda,x))
		=
		\rho_i(\lambda, e_j(x))
		=
		\begin{cases}
			\rho_i(\lambda,x) 
			& \text{if\ }i=j
			\\
			0
			& \text{if\ }i\neq j
		\end{cases}
	\]
	In particular, $\rho_i$ takes values in $\CB_i(V,G)$.
\end{lemma}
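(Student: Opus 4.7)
The plan is to read off both equalities directly from Proposition~\ref{pre-idempotence} by specialising one of its parameters to $1$, and then deduce the ``in particular'' clause from the identification $\CB_i(V,G)=\ker(1-e_i)$. Since the McMullen idempotents are defined by $e_i(x)=\rho_i(1,x)$ (with the abused convention that $\rho_0(\lambda,x)=\rho_0(x)$, so that $e_0=\rho_0$), the expressions $e_i(\rho_j(\lambda,x))$ and $\rho_i(\lambda,e_j(x))$ are nothing but $\rho_i(1,\rho_j(\lambda,x))$ and $\rho_i(\lambda,\rho_j(1,x))$ respectively.

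Now Proposition~\ref{pre-idempotence} gives, for all $\lambda,\mu\in[0,\infty)$, the identity $\rho_i(\lambda,\rho_i(\mu,x))=\rho_i(\lambda\mu,x)$ in the case $i=j$ and $\rho_i(\lambda,\rho_j(\mu,x))=0$ in the case $i\neq j$. Specialising $\mu=1$ in the first formula yields $\rho_i(1,\rho_i(\lambda,x))=\rho_i(\lambda,x)$, i.e.\ $e_i(\rho_i(\lambda,x))=\rho_i(\lambda,x)$, while specialising $\lambda\leftrightarrow\lambda$ and $\mu=1$ in reverse order gives $\rho_i(\lambda,\rho_i(1,x))=\rho_i(\lambda,x)$, i.e.\ $\rho_i(\lambda,e_i(x))=\rho_i(\lambda,x)$. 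The two cases of the vanishing identity are handled analogously by putting $\mu=1$ or $\lambda=1$ in the second formula of Proposition~\ref{pre-idempotence}.

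For the final assertion that $\rho_i(\lambda,x)\in\CB_i(V,G)$, I would recall from Definition~\ref{idempotents-splitting} that $\CB_i(V,G)=e_i(\CB(V,G))=\ker(1-e_i)$, so it suffices to verify $e_i(\rho_i(\lambda,x))=\rho_i(\lambda,x)$, which is precisely the $i=j$ case already established. No real obstacle arises here: the only mild subtlety is to keep track of the notational convention for the index $i=0$, ensuring that when either $i$ or $j$ equals $0$ we interpret $\rho_0(\mu,-)$ as the $\mu$-independent map $\rho_0$, so that Proposition~\ref{pre-idempotence} can be applied uniformly.
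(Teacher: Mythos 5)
Your proof is correct and follows the same route as the paper: both specialise one of the two scaling parameters in Proposition~\ref{pre-idempotence} to $1$ and unwind the definition $e_i(x)=\rho_i(1,x)$. The extra care you take with the $i=0$ convention and the explicit appeal to $\CB_i(V,G)=\ker(1-e_i)$ for the final clause are fine details that the paper leaves implicit.
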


\begin{proof}
	This follows from Proposition~\ref{pre-idempotence} 
	by specialising one or other of the scaling factors that appear
	there to $1$.
\end{proof}

\begin{proof}[Proof of Theorem~\ref{theorem-a-refined}]
	Let us again write $p=\{0\}$ for the convex body in $V$ consisting of 
	just the origin.
	To prove the first part we define maps 
	\[
		\beta\colon\bbZ\to\CB_0(V,G),
		\qquad
		\gamma\colon\CB_0(V,G)\to\bbZ
	\]
	as follows.
	Define $\beta$ by the specification $\beta(1)=[p]$.
	For $\gamma$, consider the 
	constant valuation $\varphi_0\colon\K(V)\to\bbZ$ with value $1$.
	This corresponds to a homomorphism 
	$\bar\varphi_0\colon\CB(V,G)\to \bbZ$,
	and we define $\gamma$ to be the restriction of $\bar\varphi_0$
	to $\CB_0(V,G)$.
	We will show that $\beta$ and $\gamma$ are inverse isomorphisms.
	
	Let $\pi_0\colon\CB(V,G)\to\CB_0(V,G)$ denote the projection
	map, so that $\pi_0$ is just obtained from 
	$e_0\colon\CB(V,G)\to\CB(V,G)$
	by restricting the codomain. 
	In particular $\pi_0[X] = e_0[X] = [p]$ for any $X\in\K(V)$.
	Now $\beta\gamma \pi_0 = \beta\bar\varphi_0 e_0$ by the definition
	of $\gamma$.
	Evaluating on $[X]$ gives us 
	\[\beta\gamma \pi_0([X]) 
	=\beta\bar\varphi_0 e_0([X])
	=\beta\bar\varphi_0[p]
	=\beta(1)
	=[p]
	=\pi_0[X].\]
	Thus the maps $\beta\gamma\pi_0$ and $\pi_0$
	coincide on elements $[X]$ for $X\in\K(V)$, and are therefore equal
	by the universal property.
	Since the image of $\pi_0$ is $\CB_0(V,G)$, it follows that
	$\beta\gamma$ is the identity map as required.
	For the other composition, $\gamma\beta(1)=\gamma([p])
	=\bar\varphi_0([p])=1$
	so that $\gamma\beta$ is also the identity map.

	For the second part, we consider the continuous map $\ast$
	defined as follows:
	\[
		\ast\colon [0,\infty)\times\CB_i(V,G)\longrightarrow\CB_i(V,G),
		\qquad
		(\lambda,x)\longmapsto \lambda\ast x = \rho_i(\lambda,x)
	\]
	(This takes values in $\CB_i(V,G)$ by Lemma~\ref{rhoi-ei}.)
	It satisfies the following properties
	for all $\lambda,\mu\in[0,\infty)$ and all $x,y\in\CB_i(V,G)$:
	\begin{enumerate}
		\item
		$(\lambda+\mu)\ast x = \lambda\ast x + \mu\ast x$,
		since $\rho_i$ is a homomorphism in its first variable.
		\item
		$\lambda\ast(x+y) = \lambda\ast x + \lambda\ast y$
		by Proposition~\ref{rhoihomomorphism}.
		\item
		$\lambda\ast(\mu\ast x)= \lambda\mu\ast x$
		by Proposition~\ref{pre-idempotence}.
		\item
		$1\ast x = x$ since $1\ast x=\rho_i(1,x)=e_i(x)=x$.
		Here $e_i(x)=x$ because $x\in\CB_i(V,G)$.
	\end{enumerate}
	We may extend $\ast$ to a map 
	\[
		\star
		\colon
		\bbR\times\CB_i(V,G)\to\CB_i(V,G)
	\]
	defined as follows:
	\[
		\lambda\star x
		=
		\begin{cases}
			\lambda\ast x
			& 
			\text{if\ }\lambda\geqslant 0
			\\
			-((-\lambda)\ast x)
			& 
			\text{if\ }\lambda\leqslant 0
		\end{cases}
	\]
	Put another way, if $\lambda\in[0,\infty)$, then
	$(\pm\lambda)\star x = \pm (\lambda\ast x)$.
	From the points above it follows that $0\ast x=0$ for any $x$,
	and consequently $\star$ is well-defined,
	and moreover $\star$ is continuous by the gluing lemma.
	The four points above continue to hold for $\star$ in place
	of $\ast$ by a tedious but straightforward exercise.
	But this now shows precisely that $\star$ determines 
	the structure of a real topological 
	vector space on $\CB_i(V,G)$.
	Since we already know that each $\CB_i(V,G)$ is Hausdorff,
	this completes the proof that the $\CB_i(V,G)$ are Hausdorff
	topological vector spaces.

	To prove homogeneity note that 
	$\Phi_i(\lambda X)$ is the component of $\Phi(\lambda X)$ that
	lies in $\CB_i(V,G)$, and that $\Phi_i(X)
	=e_i\Phi(X)$.
	Now $\Phi(\lambda X) = D(\lambda,[X])
	=\rho_0([X])+\rho_1(\lambda^1,[X])+\cdots+\rho_d(\lambda^d,[X])$
	so that 
	\[
		\Phi_i(\lambda X) 
		= \rho_i(\lambda^i,[X])
		=\rho_i(\lambda^i,e_i[X])
		=\rho_i(\lambda^i,\Phi_i(X))
		=\lambda^i\star\Phi_i(X)
	\]
	as required.
	Here the equality $\rho_i(\lambda^i,[X]) =\rho_i(\lambda^i,e_i[X])$
	follows from Lemma~\ref{rhoi-ei}.
\end{proof}

\begin{proof}[Proof of Theorem~B]
The result follows by combining the decomposition obtained in 
Definition~\ref{idempotents-splitting}, 
with Lemma~\ref{one-point} and Theorem~\ref{theorem-a-refined}.
\end{proof}

\begin{proof}[Proof of Theorem~A]
	Let $p=\{0\}$ be the one-point space at the origin,
	and let $X\in\K(V)$.
	Under dilation we have $0\cdot X = p$.
	By homogeneity we therefore have 
	$\Phi_i(p)=\Phi_i(0\cdot X)=0^i\Phi_i(X)=0$ for $i>0$.
	An arbitrary $G$-invariant continuous valuation 
	$\varphi\colon\K(V)\to A$
	has
	$\varphi = \bar\varphi\circ\Phi
	=\sum_{i=0}^d \bar\varphi\circ\Phi_i$
	so that $c=\varphi(p)=\bar\varphi\Phi_0(p)$.
	Consequently $\varphi-c
	= \bar\varphi\circ \left(\sum_{i=1}^d\Phi_i\right)$
	factors through the Hausdorff topological vector space 
	$\bigoplus_{i=1}^d\CB_i(V,G)$ as required.
\end{proof}
	
\bibliographystyle{plain}
\bibliography{convexbodies}
\end{document}